\newcommand\mypagesizel{
\textwidth= 6.5in
\textheight=9in
\voffset-.55in
\hoffset -0.75in
\marginparwidth=56pt
}
\newcommand{\Pic}{\textup{Pic}}
\newcommand{\Supp}{\textup{Supp}}
\newcommand{\Chow}{\textup{Chow}}
\newcommand{\p}[0]{{\mathbb P}}
\newcommand{\rat}[0]{\operatorname{RatCurves}^n}
\renewcommand{\phi}{\varphi}
\newcommand{\into}{\hookrightarrow}
\newcommand{\map}{\dashrightarrow}
\renewcommand{\le}{\leqslant}
\renewcommand{\ge}{\geqslant}
\newcommand{\bQ}{\mathbb{Q}}
\newcommand{\bZ}{\mathbb{Z}}
\newcommand{\sA}{\mathscr{A}}
\newcommand{\sC}{\mathscr{C}}
\newcommand{\sE}{\mathscr{E}}
\newcommand{\sF}{\mathscr{F}}
\newcommand{\sG}{\mathscr{G}}
\newcommand{\sH}{\mathscr{H}}
\newcommand{\sI}{\mathscr{I}}
\newcommand{\sK}{\mathscr{K}}
\newcommand{\sL}{\mathscr{L}}
\newcommand{\sM}{\mathscr{M}}
\newcommand{\sN}{\mathscr{N}}
\newcommand{\sO}{\mathscr{O}}
\newcommand{\sQ}{\mathscr{Q}}
\newtheorem{thm}{Theorem}[section]
\newtheorem*{thm*}{Theorem}
\newtheorem{lemma}[thm]{Lemma}
\newtheorem{cor}[thm]{Corollary}
\newtheorem{prop}[thm]{Proposition}
\theoremstyle{definition}
\newtheorem{defn}[thm]{Definition}
\newtheorem{say}[thm]{}
\newtheorem{notation}[thm]{Notation}
\newtheorem{defn-thm}[thm]{Definition-Theorem}
\newtheorem{rem}[thm]{Remark}
\theoremstyle{remark}
\newtheorem*{not-and-def}{Notation and definitions}
\numberwithin{equation}{section}
\begin{document}

\title[On Fano foliations 2]{On Fano foliations 2}

\author{Carolina \textsc{Araujo}} 

\address{\noindent Carolina Araujo: IMPA, Estrada Dona Castorina 110, Rio de
  Janeiro, 22460-320, Brazil} 

\email{caraujo@impa.br}

\author{St\'ephane \textsc{Druel}}

\address{St\'ephane Druel: Institut Fourier, UMR 5582 du
  CNRS, Universit\'e Grenoble 1, BP 74, 38402 Saint Martin
  d'H\`eres, France} 

\email{druel@ujf-grenoble.fr}

\thanks{The first named author was partially supported by CNPq and Faperj Research 
  Fellowships}

\thanks{The second named author was partially supported by the CLASS project of the 
A.N.R}

\subjclass[2010]{14M22, 37F75}

\begin{abstract}
In this paper we pursue the study of mildly singular del Pezzo foliations on complex projective manifold
started in \cite{fano_fols}. 
\end{abstract}

\maketitle


\tableofcontents

%
%

\section{Introduction}

In recent years, techniques from higher dimensional algebraic geometry, specially from 
the minimal model program, have been successfully applied to the study of global properties of 
holomorphic foliations. 
This led, for instance, to the birational classification of foliations by curves on surfaces in \cite{brunella}.  
Motivated by these developments,  we initiated in \cite{fano_fols} a systematic study of \emph{Fano foliations}.
These are holomorphic foliations $\sF$ on complex projective manifolds with ample anti-canonical class  
$-K_{\sF}$. 
One special property of Fano foliations is that their leaves are always covered by rational curves,
even when these leaves are not algebraic (see for instance \cite[Proposition 7.5]{fano_fols}).

The index $\iota_{\sF}$ of a Fano foliation $\sF$ on a  complex projective manifold $X$
is the largest integer dividing $-K_{\sF}$ in $\Pic(X)$.
In analogy with Kobayachi-Ochiai's theorem on the index of Fano manifolds (Theorem~\ref{kobayashi_ochiai}), 
we proved in \cite[Theorem 1.1]{adk08} that the index of a Fano foliation $\sF$ on a  complex projective manifold
is bounded above by its rank, $\iota_{\sF}\le r_{\sF}$. 
Equality holds if and only if $X\cong \p^n$ and 
$\sF$ is induced by a  linear projection $\p^n \dashrightarrow \p^{n-r_{\sF}}$.
Our expectation is that Fano foliations with large index are the simplest ones. 
So we proceeded to investigate the next case, namely 
Fano foliation $\sF$ of rank $r$ and index $\iota_{\sF}=r-1$.
We call such foliations \emph{del Pezzo foliations}, in analogy with the case of Fano manifolds.
In contrast to the case when $\iota_{\sF}= r_{\sF}$,
there are examples of del Pezzo foliations with non-algebraic leaves. 
For instance, let $\sC$ be a foliation by curves on $\p^k$  induced by a  global vector field. 
If we take this vector field to be general, then the leaves of $\sC$ are not algebraic.
Now consider a linear projection $\psi:\p^n\map \p^{k}$, with $n>k$, and let $\sF$
be the foliation on $\p^n$ obtained as pullback of $\sC$ via $\psi$.
It is a del Pezzo foliation on $\p^n$, and its leaves are not algebraic 
(see Theorem~\ref{lpt3fold}(2) for the complete classification of del Pezzo foliations on $\p^n$).
The first main result of \cite{fano_fols} says that these are the only examples. 

\begin{thm}[{\cite[Theorem 1.1]{fano_fols}}]\label{thma}
Let $\sF$ be a del Pezzo foliation on a complex projective manifold $X\not\cong \p^n$.
Then $\sF$ is algebraically integrable, and its general leaves are rationally connected.
\end{thm}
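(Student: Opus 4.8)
The plan is to reduce everything to the Bogomolov--McQuillan algebraic integrability criterion: if through a general point of $X$ there is a rational curve $C$ tangent to $\sF$ with $T_{\sF}|_{\tilde C}$ ample, then $\sF$ is algebraically integrable and the closure of a general leaf is rationally connected. The whole difficulty is to manufacture such a curve, and the index is precisely what makes this possible in all cases except $X\cong\p^n$.

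First I would set up the minimal family. Since $-K_{\sF}$ is ample the leaves of $\sF$ are uniruled (\cite[Proposition 7.5]{fano_fols}), so there is a dominating family $H$ of rational curves tangent to $\sF$; fix one of minimal $(-K_{\sF})$-degree. A bend-and-break argument for foliations (as in \cite{adk08}) gives $d:=-K_{\sF}\cdot C\le r_{\sF}+1$ for a general $[C]\in H$, and for such a $C$ through a general point of $X$ the bundle $T_{\sF}|_{\tilde C}$ is globally generated, hence
\[
T_{\sF}|_{\tilde C}\;\cong\;\cO_{\p^1}(2)\oplus\cO_{\p^1}(1)^{\oplus(d-2)}\oplus\cO_{\p^1}^{\oplus(r_{\sF}-d)},\qquad 2\le d\le r_{\sF}+1 .
\]
Now I use the index: writing $-K_{\sF}=(r_{\sF}-1)H$ in $\Pic(X)$ forces $(r_{\sF}-1)\mid d$, so either $d=r_{\sF}+1$ (possible only for $r_{\sF}\in\{2,3\}$), in which case $T_{\sF}|_{\tilde C}\cong\cO_{\p^1}(2)\oplus\cO_{\p^1}(1)^{\oplus(r_{\sF}-1)}$ is ample and Bogomolov--McQuillan finishes the proof outright; or else $d\le r_{\sF}$, and then (checking the small ranks by hand) $T_{\sF}|_{\tilde C}\cong\cO_{\p^1}(2)\oplus\cO_{\p^1}(1)^{\oplus(r_{\sF}-3)}\oplus\cO_{\p^1}^{\oplus 2}$, with the evident modification when $r_{\sF}=2$.

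So I may assume $T_{\sF}|_{\tilde C}$ has a trivial summand, argue by contradiction that $\sF$ is not algebraically integrable, and aim for $X\cong\p^n$. The key object is the \emph{positive part}: along a general $C\in H$ the subbundle $\cO_{\p^1}(2)\oplus\cO_{\p^1}(1)^{\oplus(r_{\sF}-3)}\subset T_{\sF}|_{\tilde C}$ is intrinsic (it is generated by the sections of $T_{\sF}|_{\tilde C}$ vanishing at one point), and as $C$ ranges over general members of $H$ through general points these patch, over the open set they sweep out, to a saturated rank-$(r_{\sF}-2)$ subsheaf $\sG\subsetneq\sF$ to which the curves of $H$ are tangent and with $T_{\sG}|_{\tilde C}$ of degree $r_{\sF}-1=\rank(\sG)+1$ --- the maximum compatible with global generation and the $T_{\tilde C}=\cO_{\p^1}(2)$ summand, so that the variety of minimal rational tangents of $\sG$ at a general point fills all of $\p(T_{\sG})$. \textbf{The main obstacle} is to prove that $\sG$ is again a foliation, i.e.\ involutive; this is a first-order computation along the curves of $H$ using the involutivity of $\sF$. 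Granting it, the Cho--Miyaoka--Shepherd-Barron/Kebekus--Sol\'a Conde--Toma argument (in the form used in \cite{adk08}) shows that $\sG$ is algebraically integrable with general leaf closures isomorphic to $\p^{r_{\sF}-2}$.

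Finally I would pass to a maximal algebraically integrable subfoliation $\sG\subseteq\sH\subseteq\sF$. Since $\sF$ itself is not algebraically integrable, $\rank(\sH)\in\{r_{\sF}-2,r_{\sF}-1\}$; comparing $T_{\sH}|_{\tilde C}$ with $T_{\sG}|_{\tilde C}$ --- and using that $(\sF/\sG)|_{\tilde C}$ is trivial --- one shows that in fact $\rank(\sH)=r_{\sF}-1$, that $\sH$ is a Fano foliation, and that $-K_{\sH}\cdot C=r_{\sF}-1=\rank(\sH)$ with $-K_{\sH}$ divisible by $\rank(\sH)$ in $\Pic(X)$, i.e.\ $\iota_{\sH}=\rank(\sH)$. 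By \cite[Theorem 1.1]{adk08} this is possible only if $X\cong\p^n$ (with $\sH$ induced by a linear projection), contradicting the hypothesis. Hence the case with a trivial summand cannot occur for $X\not\cong\p^n$, we are always in the ample case, and the theorem follows. The two genuinely delicate points are the involutivity of $\sG$ and the precise determination of the rank and index of $\sH$; the remainder is bookkeeping with splitting types over $\p^1$ and standard deformation theory of rational curves.
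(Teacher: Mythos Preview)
This paper does not prove Theorem~\ref{thma}; the result is quoted from \cite[Theorem~1.1]{fano_fols} and used as a black box in the proof of Theorem~\ref{thm:main}. There is therefore no argument in the present paper to compare your sketch against.

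That said, your outline does follow the strategy of \cite{fano_fols}: take a minimal dominating family of rational curves tangent to $\sF$, constrain the splitting type of $\sF|_{\tilde C}$ using the del Pezzo index, apply the Bogomolov--McQuillan criterion when the restriction is ample, and otherwise build a subfoliation from the positive part and finish via \cite{adk08}. Two places need work. First, bookkeeping: your displayed splitting has total rank $r_{\sF}-1$ instead of $r_{\sF}$ (the trivial summand should be $\sO_{\p^1}^{\oplus(r_{\sF}-d+1)}$); your later formula for $d=r_{\sF}-1$ is correct, but for $r_{\sF}\in\{2,3\}$ the non-ample case is $d=2$ and the positive part then has rank~$1$, not $r_{\sF}-2$, so those ranks require a genuinely separate argument rather than an ``evident modification''. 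Second, and this is the real gap: from $-K_{\sH}\cdot C=r_{\sF}-1$ you cannot conclude that $-K_{\sH}$ is divisible by $r_{\sF}-1$ in $\Pic(X)$ --- one intersection number does not detect divisibility --- so the invocation of \cite[Theorem~1.1]{adk08} to force $X\cong\p^n$ is unjustified as written. You also give no reason why the maximal algebraically integrable subfoliation $\sH$ must strictly contain $\sG$. You have correctly identified the two delicate points (involutivity of $\sG$ and the structure of $\sH$); it is the second of these where your sketch currently does not close.
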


One of the main ingredients in our study of Fano foliations  is the notion 
of \emph{log leaf} for an algebraically integrable foliation.
Given an algebraically integrable foliation $\sF$ on a
complex projective manifold $X$,  denote by $\tilde{e}:\tilde F\to  X$ 
the normalization of the closure of a general leaf of $\sF$. 
There is a naturally defined effective Weil divisor $\tilde \Delta$ on $\tilde F$ such that
$K_{\tilde F}  +  \tilde \Delta = \tilde{e}^*K_{\sF}$
(see Definition~\ref{defn:log_leaf} for details). 
We call the pair $( \tilde F,  \tilde \Delta)$ a general \emph{log leaf} of $\sF$.
In \cite{fano_fols}, we used the log leaf to define new notions of singularities for 
algebraically integrable foliations, following
the theory of singularities of pairs from the minimal model program.
Namely, we say that $\sF$ has \emph{log canonical singularities along a general leaf} if 
$(\tilde F,\tilde \Delta)$ is log canonical.
By Theorem~\ref{thma}, these notions apply to 
del Pezzo foliations on projective manifolds $X\not\cong \p^n$.
In  \cite{fano_fols}, we established the following classification of del Pezzo foliations
with mild singularities.

\begin{thm}[{\cite[9.1 and Theorems 1.3, 9.2, 9.6]{fano_fols}}]\label{thmb}
Let $\sF$ be a del Pezzo foliation of rank $r$ on a complex projective manifold $X\not\cong \p^n$,
and suppose that $\sF$  has log canonical singularities and is locally free along a general leaf. 
Then 
\begin{itemize}
	\item either $\rho(X)=1$;
	\item or $r\le 3$ and $X$ is a $\p^m$-bundle over $\p^k$.
\end{itemize}
In the latter case,  one of the following holds.
\begin{enumerate}
\item $X\cong \p^1\times \p^k$, 
and $\sF$ is the pullback via the second projection of a  
foliation $\sO_{\p^k}(1)^{\oplus i}\subset T_{\p^k}$ for some $i\in\{1,2\}$ $(r\in\{2,3\})$.

\item There exist
\begin{itemize}
	\item an exact sequence of vector bundles  $0\to \sK\to \sE\to \sQ \to 0$ on $\p^k$; and 
	\item a foliation by curves $\sC\cong q^*\det(\sQ)\subset T_{\p_{\p^k}(\sK)}$, where 
		$q:\p_{\p^k}(\sK)\to \p^k$ denotes the natural projection;
\end{itemize}
such that $X\cong \p_{\p^k}(\sE)$, and $\sF$ is the pullback of $\sC$ via the relative linear projection $\p_{\p^k}(\sE)\map \p_{\p^k}(\sK)$. 
Moreover, one of the following holds.
			\begin{enumerate}
				\item $k=1$, $\sQ\cong \sO_{\p^1}(1)$, $\sK$ is an ample vector bundle
					such that $\sK\not\cong\sO_{\p^1}(a)^{\oplus m}$ for any integer $a$,
					and $\sE\cong\sQ \oplus\sK$ $(r=2)$.
				\item $k=1$, $\sQ\cong \sO_{\p^1}(2)$, $\sK\cong\sO_{\p^1}(a)^{\oplus m}$ 
					for some integer $a \ge 1$,
					and $\sE\cong\sQ \oplus\sK$ $(r=2)$.
				\item $k=1$, $\sQ\cong \sO_{\p^1}(1)\oplus \sO_{\p^1}(1)$, 
					$\sK\cong\sO_{\p^1}(a)^{\oplus (m-1)}$ 
					for some integer $a \ge 1$,
					and $\sE\cong\sQ \oplus\sK$ $(r=3)$.
				\item $k \ge 2$, $\sQ\cong \sO_{\p^k}(1)$, and $\sK$ is $V$-equivariant 	
					for some $V\in H^0\big(\p^k,T_{\p^k}\otimes\sO_{\p^k}(-1)\big)\setminus \{0\}$
					$(r=2)$.
			\end{enumerate}
Conversely, given $\sK$, $\sE$ and $\sQ$ satisfying any of the conditions above, there exists a del Pezzo foliation of that type.
\end{enumerate}
\end{thm}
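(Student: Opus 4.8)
The plan is to reduce, via Theorem~\ref{thma}, to the study of the general log leaf $(\tilde F,\tilde\Delta)$ of $\sF$, and then, when $\rho(X)\ge 2$, to analyse the extremal contractions of $X$. By Theorem~\ref{thma}, $\sF$ is algebraically integrable with rationally connected general leaves, so the general log leaf is defined; it has dimension $r$ and, by hypothesis, is log canonical. Write $-K_{\sF}=(r-1)H$ with $H$ ample (a positive multiple of $H$ is ample, hence so is $H$), and put $L:=\tilde e^{*}H$. Since $\tilde e\colon\tilde F\to X$ is finite onto the closure of a general leaf, $L$ is ample on $\tilde F$ and $-(K_{\tilde F}+\tilde\Delta)=\tilde e^{*}(-K_{\sF})=(r-1)L$; thus $(\tilde F,\tilde\Delta;L)$ is a polarized log del Pezzo pair of dimension $r$ of coindex at most one.

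The first main step is to classify such pairs. Combining a logarithmic form of Kobayashi--Ochiai's theorem (Theorem~\ref{kobayashi_ochiai}) with the log canonicity and effectiveness of $\tilde\Delta$, and using that $\sF$ is locally free along a general leaf to control the components and coefficients of $\tilde\Delta$, one obtains a short explicit list: either $\tilde F\cong\p^{r}$ with $\tilde\Delta$ a hyperplane, or $\tilde F$ is a $\p^{r-1}$-bundle over $\p^{1}$ with $\tilde\Delta$ a (possibly empty) union of fibers, with a few further degenerate possibilities. The first case forces $X\cong\p^{n}$ by \cite{adk08}, which is excluded; so in our situation the general leaf closure is birational to a $\p^{r-1}$-bundle over a rational curve. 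If $\rho(X)=1$ we are in the first alternative of the statement, so from now on assume $\rho(X)\ge 2$.

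Next, choose a $K_{X}$-negative extremal contraction and study the relative position of its fibers and $\sF$, using the log leaf classification above together with the ampleness of $-K_{\sF}$ and the index bound $\iota_{\sF}\le r_{\sF}$ of \cite{adk08}. One shows that $X$ admits such a contraction $\psi\colon X\to Z$ which is a $\p^{m}$-bundle, that $\sF$ restricts on a general fiber to a foliation induced by a linear projection, and that the quotient of $\sF$ by the vertical directions is a Fano foliation by curves $\sC$ on the projective bundle $\p_{Z}(\sK)$ obtained from $X\cong\p_{Z}(\sE)$ by the relative linear projection attached to a quotient $\sE\to\sQ$; by construction $\sF$ is the pullback of $\sC$. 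Applying Theorem~\ref{thma} (equivalently, the rank-one case of the index bound) to $\sC$ now forces $Z\cong\p^{k}$ and $r\le 3$, which yields the $\p^{m}$-bundle structure and item~(2).

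It remains to extract the fine classification. With $X\cong\p_{\p^{k}}(\sE)$, the exact sequence $0\to\sK\to\sE\to\sQ\to 0$, $\sC\cong q^{*}\det(\sQ)\subset T_{\p_{\p^{k}}(\sK)}$, and $\sF$ the pullback of $\sC$, one computes $-K_{\sF}$ in terms of $\sE$, $\sK$, $\sQ$ and imposes that it be ample with index exactly $r-1$; this, together with the positivity of $\sC$ forcing $\sQ$ to be ample of rank one or two and forcing the splitting $\sE\cong\sQ\oplus\sK$ (or the $V$-equivariance of $\sK$ when $k\ge 2$), produces exactly cases (a)--(d) according to whether $\deg\sQ=1$ and $k=1$, or $\deg\sQ=2$, or $\mathrm{rk}\,\sQ=2$, or $\deg\sQ=1$ and $k\ge 2$. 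The converse is then a direct verification: for each of (a)--(d) one checks that the displayed $\sC$ is a foliation by curves with the asserted positivity, that its pullback is a del Pezzo foliation, and that this pullback is log canonical and locally free along a general leaf. I expect the two global steps to be the main obstacles, namely the complete classification of the general log leaf, where the boundary $\tilde\Delta$ and the possibly singular, non-$\bQ$-factorial $\tilde F$ must be dealt with, and the passage from the description of $\sF$ near a general leaf to the global statement that $X$ is a projective bundle over $\p^{k}$, which requires recognizing scrolls among the extremal contractions of $X$ while keeping track of the induced foliation on the base.
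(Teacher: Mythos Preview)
This statement is not proved in the present paper: it is quoted from \cite{fano_fols} (the attribution line reads ``9.1 and Theorems 1.3, 9.2, 9.6''), so there is no proof here to compare your proposal against. The current paper uses Theorem~\ref{thmb} as input to attack the remaining case $\rho(X)=1$ (Theorem~\ref{thm:main}).

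That said, a few points on your sketch. Your log leaf classification is off: when $\tilde F\cong\p^{r}$ the boundary $\tilde\Delta$ lies in $|\sO_{\p^{r}}(2)|$, not in $|\sO_{\p^{r}}(1)|$ (compare Theorem~\ref{thm:classification_del_pezzo}(1)); and the scroll case $(\tilde F,\sL)\cong(\p_{\p^{1}}(\sE),\sO(1))$ does not give $\tilde\Delta$ as a union of fibers but rather the specific divisors listed in Theorem~\ref{thm:classification_del_pezzo}(5a--c). There are also the quadric and cone cases (2) and (6) which you omit. Second, the implication ``$\tilde F\cong\p^{r}$ forces $X\cong\p^{n}$'' does not come from \cite{adk08} (which is the foliation index bound $\iota_{\sF}\le r_{\sF}$); the mechanism, as one sees in the proof of Theorem~\ref{thm:main} in this paper, is rather: use the common-point phenomenon (Proposition~\ref{lemma:common_lc_center} here, \cite[Proposition~5.3]{fano_fols} in the earlier paper) to produce large families of lines through a fixed point, bound $\iota_X$ via Lemma~\ref{lemma:bound_index}, and then invoke Kobayashi--Ochiai. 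Finally, the global step you flag --- passing from the leafwise picture to ``$X$ is a $\p^{m}$-bundle over $\p^{k}$'' --- is indeed the substantial part, and in \cite{fano_fols} it is carried out over several sections (whence the four separate citations); your outline of this step is too coarse to stand as a proof.
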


The goal of the present paper is to continue the classification of del Pezzo foliations 
on Fano manifolds $X\not\cong \p^n$
having log canonical singularities and being locally free along a general leaf.
In view of Theorem~\ref{thmb}, we need to understand del Pezzo foliations on Fano manifolds with Picard number $1$.
Our main result is the following.

\begin{thm}\label{thm:main} Let $\sF$ be a del Pezzo foliation of rank $r\ge 3$ on an $n$-dimensional Fano manifold 
$X\not\cong \p^n$ with $\rho(X)=1$, and  suppose that $\sF$ has log canonical singularities and is locally free along  a general leaf. 
Then $X\cong Q^n$ and $\sF$ is induced by the restriction to $Q^n$ of a linear projection $\p^{n+1}\dashrightarrow \p^{n-r}$.
\end{thm}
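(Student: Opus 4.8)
The plan is to pin down the general \emph{log leaf} of $\sF$, show it must be a smooth quadric together with a hyperplane section, and then reconstruct $X$ and $\sF$ from the resulting family of leaves.

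\smallskip
\noindent\emph{Step 1: the log leaf is a log del Pezzo pair of index $r-1$.} Since $\rho(X)=1$ we may write $\Pic(X)=\bZ\cdot H$ with $H$ ample; as $-K_\sF$ is ample and $\iota_\sF=r-1$, necessarily $-K_\sF\sim(r-1)H$. We may assume $r<n$, i.e.\ $\sF\subsetneq T_X$. By Theorem~\ref{thma}, $\sF$ is algebraically integrable with rationally connected general leaf, so Definition~\ref{defn:log_leaf} supplies the normalization $\tilde e\colon\tilde F\to X$ of the closure of a general leaf and an effective divisor $\tilde\Delta$ with $K_{\tilde F}+\tilde\Delta=\tilde e^{*}K_\sF$. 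Put $\tilde H:=\tilde e^{*}H$, an ample Cartier divisor on the $r$-dimensional variety $\tilde F$. Then
\[
-(K_{\tilde F}+\tilde\Delta)=(r-1)\,\tilde H ,
\]
$(\tilde F,\tilde\Delta)$ is log canonical, and $\sF|_{\tilde F}$ is a rank-$r$ vector bundle with $\det\bigl(\sF|_{\tilde F}\bigr)\cong\cO_{\tilde F}\bigl((r-1)\tilde H\bigr)$ which coincides with $T_{\tilde F}$ over a dense open subset. In particular $(\tilde F,\tilde\Delta)$ is a log canonical log del Pezzo pair of dimension $r\ge3$ whose Fano index with respect to $\tilde H$ is at least $r-1=\dim\tilde F-1$.

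\smallskip
\noindent\emph{Step 2: identifying the log leaf.} For the polarized variety $(\tilde F,\tilde H)$ the adjoint divisor $K_{\tilde F}+(\dim\tilde F-1)\tilde H=-\tilde\Delta$ is anti-effective, and it fails to be nef as soon as $\tilde\Delta\neq0$. Fujita's adjunction-theoretic classification of polarized pairs in this range, refined by the requirement that an \emph{effective} log canonical boundary $\tilde\Delta$ realizing $-(K_{\tilde F}+\tilde\Delta)=(r-1)\tilde H$ actually exist, and by $r\ge3$, reduces the possibilities to: $(Q^{r},Q^{r-1})$ with $\tilde H=\cO_{Q^{r}}(1)$; $(\p^{r},\text{quadric hypersurface})$ with $\tilde H=\cO_{\p^{r}}(1)$; generalized cones over lower-dimensional pairs of these types; and, when $\tilde\Delta=0$, a del Pezzo variety of dimension $r$ in the sense of Fujita. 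I then rule out all but the first. The case $\tilde\Delta=0$ is impossible for $r<n$, since it would force the general leaf to fill out $X$ and hence $r=n$. The generalized-cone cases contradict either the local freeness of $\sF$ along the general leaf (the vertex of the cone is incompatible with $\sF|_{\tilde F}$ being a bundle generically equal to $T_{\tilde F}$) or, after spreading out over the family of leaves, produce a nontrivial contraction of $X$, against $\rho(X)=1$. Finally, if $\tilde F\cong\p^{r}$, then the linear structure carried by $(\p^{r},\tilde\Delta)$ spreads over the family of leaves to exhibit $\sF$ as the pull-back of a foliation by curves on a projective space under a linear projection, whence $X\cong\p^{n}$ by Theorem~\ref{lpt3fold}, contrary to hypothesis. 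Therefore $(\tilde F,\tilde\Delta)\cong(Q^{r},Q^{r-1})$ and $\tilde H=\cO_{Q^{r}}(1)$.

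\smallskip
\noindent\emph{Step 3: globalizing.} Embedding $X\into\p^{N}=\p\bigl(\H^{0}(X,H)^{\vee}\bigr)$ by $|H|$, each general leaf of $\sF$ becomes an honest $r$-dimensional smooth quadric spanning a linear $\p^{r+1}$, and all of them pass through a common smooth quadric $Z$ of dimension $r-1$ spanning a linear $\p^{r}$ (the common image of the boundary divisors of the log leaves). Hence a general point of $X$ is joined to a general point of $Z$ by a conic lying on $X$; since $X$ is Fano with $\rho(X)=1$ and $X\not\cong\p^{n}$, a Cartan--Fubini type / conic-connectedness argument then identifies $X$ with a smooth quadric $Q^{n}\subset\p^{n+1}$ and forces $\iota_{X}=n$. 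Tracking the family of leaves inside $Q^{n}$ shows that their linear spans are precisely the fibres of a linear projection $\pi\colon\p^{n+1}\dashrightarrow\p^{n-r}$ with centre the linear $\p^{r}$ spanned by $Z$, and that $\sF$ is the foliation induced by $\pi|_{Q^{n}}$, as desired. I expect the main difficulty to lie in Steps 2 and 3: eliminating the projective-space and cone leaves using only $\rho(X)=1$ and local freeness, and—above all—the globalization, i.e.\ upgrading ``$X$ is swept out by quadrics through $Z$'' to ``$X\cong Q^{n}$''. It is precisely here that the three hypotheses $\rho(X)=1$, $X\not\cong\p^{n}$ and $r\ge3$ are all genuinely needed.
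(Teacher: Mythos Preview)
Your outline tracks the paper's strategy in spirit---classify the log leaf, then bound the index of $X$---but several steps are either under-justified or incorrect, and you are missing the key technical ingredient that makes the whole argument run.

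\textbf{The common-center proposition is missing.} In Step~3 you assert that ``all of them pass through a common smooth quadric $Z$ (the common image of the boundary divisors of the log leaves)''. This is exactly the hard point: why should the images $\tilde e(\tilde\Delta)$ coincide as the leaf varies? The paper's engine is Proposition~\ref{lemma:common_lc_center}, a strengthening of the ``common point'' phenomenon for Fano foliations with log canonical singularities along the leaf, proved via a positivity obstruction (Theorem~\ref{thm:-KX/Y_not_ample}) on the relative anticanonical divisor over a curve in the family of leaves. Without this, your Step~3 has no foundation; your ``Cartan--Fubini / conic-connectedness'' sketch is too vague to substitute for it. The paper instead uses the common center to produce an $(n-2)$-dimensional family of free \emph{lines} through a general point of its image, then concludes $\iota_X\ge n$ via Lemma~\ref{lemma:bound_index} and invokes Kobayashi--Ochiai (Theorem~\ref{kobayashi_ochiai}).

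\textbf{Step~2 has incorrect eliminations and a missing case.} Your reason for $\tilde\Delta\neq 0$ (``would force the general leaf to fill out $X$'') is not right; the paper obtains this from Corollary~\ref{Delta_not_0}, again ultimately from the common-center proposition. Your elimination of the $\tilde F\cong\p^r$ case is too soft; the paper uses the common point plus lines to force $X\cong\p^n$. Most importantly, the cone case is \emph{not} eliminated: the paper shows by a direct computation that only cones over a rational normal curve survive, and then this case (like the quadric case and the case $(5c)$ with $r=3$, which you omitted) feeds into the same line-counting argument that yields $X\cong Q^n$. Finally, the identification of $\sF$ on $Q^n$ as a restricted linear projection is its own separate argument (Proposition~\ref{lemma:fols_in_Q^n}), not a corollary of ``tracking the family of leaves''.
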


\begin{rem}
Codimension $1$ del Pezzo foliations on Fano manifolds with Picard 
number $1$ were classified in \cite[Proposition 3.7]{loray_pereira_touzet_2}.
We extended this classification to mildly singular varieties, without restriction on the Picard number in 
\cite[Theorem 1.3]{codim_1_del_pezzo_fols}.

We also obtain a partial classification when $r=2$ (Proposition~\ref{classification_r=2}).
\end{rem}

In order to prove Theorem~\ref{thmb}, we consider a general log leaf $(\tilde F,\tilde \Delta)$ of $\sF$.
Under the assumptions of  Theorem~\ref{thmb},  $(\tilde F,\tilde \Delta)$ is a \emph{log del Pezzo pair}:
it is a log canonical pair of dimension $r$ satisfying $K_{\tilde F}  +  \tilde \Delta = (r-1)L$,
where $L$ is an ample divisor on $\tilde F$. 
The first step in the proof of Theorem~\ref{thmb} consists in classifying all log del Pezzo pairs. 
This is done in Section~\ref{ldP_pairs}, using Fujita's theory of  $\Delta$-genus.
Once we know the general log leaf $(\tilde F,\tilde \Delta)$ of $\sF$, 
we consider families of rational curves on $X$ that restrict to special families of rational curves on $\tilde F$.
The necessary results from the theory of rational curves are briefly reviewed in Section~\ref{Fanos}.
The idea is to use these families of rational curves to bound the index of $X$ from below. 
In order to obtain a good bound, we need to show that the dimension of these families of rational curves 
is big enough. Here enters a very special property 
of algebraically integrable Fano foliations having log canonical singularities along a general leaf: 
there is a common point contained in the closure of a general leaf (\cite[Proposition 5.3]{fano_fols}).
For our current purpose, we need the following strengthening of this result 
(see Definition~\ref{lccenter} for the notion of log canonical center).

\begin{prop}\label{weak_lemma:common_lc_center}
Let $\sF$ be an algebraically integrable Fano foliation on a complex projective manifold $X$
having log canonical singularities along a general leaf.
Then there is a closed  irreducible subset $T \subset X$ satisfying the following property.
For a general log leaf $(\tilde{F},\tilde{\Delta})$, there exists a log canonical center 
$S$ of $(\tilde{F},\tilde{\Delta})$ whose image in $X$ is $T$.
\end{prop}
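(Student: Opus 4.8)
The plan is to bootstrap from the weaker statement \cite[Proposition 5.3]{fano_fols}, which already provides a common point $x_0 \in X$ lying in the closure of a general leaf, and upgrade it to the existence of a common log canonical \emph{center} with a well-defined image $T$. First I would recall the setup: since $\sF$ is algebraically integrable with log canonical singularities along a general leaf, we have the family of log leaves $(\tilde F, \tilde \Delta)$, and by \cite[Proposition 5.3]{fano_fols} the closures of general leaves all pass through a fixed point $x_0$. The point $x_0$ has a preimage in each $\tilde F$ (possibly several points, since $\tilde e$ need not be injective), and the key observation is that the divisor $\tilde \Delta$ on $\tilde F$ is precisely the pullback of the ``tangency/branch'' locus, so that $x_0$, being forced to lie on infinitely many leaves, must be a point where the foliation is highly degenerate — its preimage should land in the non-klt locus, i.e.\ in a log canonical center of $(\tilde F, \tilde \Delta)$.

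The main work is then a specialization/limit argument to produce one \emph{irreducible} $T$ that works for the general log leaf simultaneously. Here I would use the structure of the family: let $\pi : \mathcal{U} \to B$ be the universal family of closures of leaves (after normalization and shrinking $B$), with evaluation $e : \mathcal{U} \to X$. The fiber $\mathcal{U}_b$ over a general $b \in B$ is the general log leaf $\tilde F$, carrying its divisor $\tilde \Delta_b$. For each $b$ consider the (finite, nonempty) set of log canonical centers of $(\tilde F_b, \tilde \Delta_b)$ that dominate, via $e$, a fixed point — concretely, the minimal lc center through each point of $e^{-1}(x_0) \cap \tilde F_b$. I would take $T$ to be the closure in $X$ of a suitable component of $e\big(\overline{\{\text{minimal lc centers through } e^{-1}(x_0)\}}\big)$ over the generic point of $B$: by Noetherian induction/generic flatness the dimension and number of these lc centers is constant on a dense open subset of $B$, and their images sweep out a constructible subset of $X$ whose closure I call $T$. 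The content to verify is that for \emph{general} $b$ there is genuinely an lc center $S \subset \tilde F_b$ with $e(S) = T$ (not just $e(S) \subseteq T$ or $T \subseteq e(\tilde F_b)$ with $S$ of smaller image): this follows because the construction is equivariant under the monodromy/flat structure of the family, so the generic behavior is the general behavior.

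The hard part will be the first step — showing that $e^{-1}(x_0) \cap \tilde F_b$ actually meets $\mathrm{Nklt}(\tilde F_b, \tilde \Delta_b)$, so that there \emph{is} an lc center through it. The naive reason is that $x_0$ lies on a positive-dimensional family of leaves, so the differential of $e$ drops rank along $e^{-1}(x_0)$, and one expects $\tilde \Delta_b$ to be large there; but making this precise requires unwinding Definition~\ref{defn:log_leaf} and relating the multiplicity of $\tilde \Delta_b$ at a point of $e^{-1}(x_0)$ to the dimension of the space of leaves through $x_0$. I would argue by contradiction: if the general log leaf were klt (or merely lc but with no lc center) along all of $e^{-1}(x_0)$, then near such a point $(\tilde F_b, \tilde \Delta_b)$ behaves like a smooth point, and one can run the curve-counting estimates of Section~\ref{Fanos} through $x_0$ to contradict either the ampleness of $-K_{\sF}$ or the finiteness of the relevant $\hom$-scheme — essentially the same tension that forced $x_0$ to exist in the first place, now pushed one dimension deeper. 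Once the existence of the lc center is in hand, the passage to an irreducible $T$ valid for the general leaf is a routine constructibility argument as sketched above.
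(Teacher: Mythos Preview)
Your strategy diverges substantially from the paper's, and the ``hard part'' you identify is a genuine gap that your sketch does not close. Even granting that $x_0$ lies in the closure of every general leaf and that the differential of $e$ drops rank along $e^{-1}(x_0)$, there is no mechanism offered for concluding that $e^{-1}(x_0)$ meets the \emph{non-klt locus} of $(\tilde F_b,\tilde\Delta_b)$: high multiplicity of $\tilde\Delta_b$ at a point, or failure of $e$ to be an immersion there, does not by itself force a log canonical center to pass through it, and the proposed contradiction via ``curve-counting estimates of Section~\ref{Fanos}'' or ``finiteness of the $\hom$-scheme'' is not an argument. There is a second, independent gap in the constructibility step: even if each general $b$ carries an lc center $S_b$ containing a point over $x_0$, the images $e(S_b)$ may genuinely vary with $b$ (e.g.\ each $S_b$ a point, the images tracing out a curve $T$), so your construction yields $e(S_b)\subsetneq T$, not $e(S_b)=T$. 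Invoking ``monodromy/flat structure'' does not repair this: monodromy permutes components of the set of lc centers but says nothing about whether their individual images fill up $T$.

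The paper does not bootstrap from the common-point result at all; it argues by contradiction on the conclusion directly. Assuming no fixed $T$ exists, one restricts the family of log leaves over a general complete intersection curve $C\subset W$ to get a pair $(U_C,\Delta_C)$ which, by inversion of adjunction \cite{kawakita07}, is log canonical over the generic point of $C$, and whose log canonical centers are \emph{generically finite} onto their images in $X$ (this is exactly what the negation of the conclusion supplies). One then invokes \cite[Proposition 7.2(ii)]{demailly97} to write the nef and big class $e_C^*(-K_\sF)\sim_\bQ A+E$ with $A$ ample and $E$ effective, supported away from all the lc centers of the general fiber. Hence $(U_C,\Delta_C+\epsilon E)$ remains log canonical over the generic point of $C$ for $0<\epsilon\ll 1$, while $-(K_{U_C/C}+\Delta_C+\epsilon E)$ becomes ample, contradicting Theorem~\ref{thm:-KX/Y_not_ample}. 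The engine here---the obstruction to ampleness of the relative log anticanonical over a curve---is the same one behind the common-point result you wanted to quote; the paper simply runs it with a sharper perturbation, and that is the idea your proposal is missing.
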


When $r\ge 3$, this allows us to show that $\iota_X\ge n$, and then use Kobayashi-Ochiai's Theorem
(Theorem~\ref{kobayashi_ochiai}) to conclude that $X\cong Q^n$. 
The classification of del Pezzo foliations on $Q^n$ is established in Proposition~\ref{lemma:fols_in_Q^n}.

Proposition~\ref{weak_lemma:common_lc_center}  still holds in the more general setting of $\bQ$-\emph{Fano} 
foliations on possibly singular projective varieties. 
Since this may be useful in other situations, we present the theory of foliations on 
normal projective varieties in Section~\ref{sections:foliations}, and prove a  more general version of 
Proposition~\ref{weak_lemma:common_lc_center} (Proposition~\ref{lemma:common_lc_center}).

\

\noindent {\bf Notation and conventions.}
We always work over the field ${\mathbb C}$ of complex numbers. 
Varieties are always assumed to be irreducible.
We denote by $\textup{Sing}(X)$ the singular locus of a variety $X$.

Given a sheaf $\sF$ of $\sO_X$-modules on a variety $X$, we denote by $\sF^{*}$ the sheaf $\sH\hspace{-0.1cm}\textit{om}_{\sO_X}(\sF,\sO_X)$.
If $r$ is the generic rank of $\sF$, then we denote by $\det (\sF)$ the sheaf $(\wedge^r \sF)^{**}$.
If $\sG$ is another sheaf of $\sO_X$-modules on $X$, then we denote by  $\sF[\otimes]\sG$ the sheaf $(\sF\otimes\sG)^{**}$.

If $\sE$ is a locally free sheaf of $\sO_X$-modules on a variety $X$, 
we denote by $\p_X(\sE)$ the Grothendieck projectivization $\textup{Proj}_X(\textup{Sym}(\sE))$,
and by $\sO_{\p}(1)$ its tautological line bundle.

If $X$ is a normal variety, we denote by
$T_{X}$ the sheaf $(\Omega_{X}^1)^*$.

We denote by $Q^n$ a (possibly singular) quadric hypersurface in $\p^{n+1}$. Given an integer $d \ge 0$,
we denote by $\mathbb{F}_d$ the surface $\mathbb{P}_{\p^1}(\sO_{\p^1}\oplus\sO_{\p^1}(-d))$.
If moreover $d\ge 1$, we denote by 
$\p(1,1,d)$ the cone in $\p^{d+1}$ over the rational normal curve of degree $d$.

\

\noindent {\bf Acknowledgements.}
Much of this work was developed during the authors' visits to IMPA and Institut Fourier.
We would like to thank both institutions for their support and hospitality.

%
%

\section{Preliminaries}

\subsection{Fano manifolds and rational curves} \label{Fanos}

\begin{defn}
A \emph{Fano manifold} $X$ is a complex projective manifold whose anti-canonical class $-K_X$
is ample. 
The index $\iota_{X}$ of $X$ is the largest integer dividing $-K_{X}$ in $\Pic(X)$. 
\end{defn}

\begin{thm}[{\cite{kobayashi_ochiai}}]\label{kobayashi_ochiai}
Let $X$ be a Fano manifold of dimension $n\ge 2$ and index $\iota_{X}$.
Then $\iota_X\le n+1$, and equality holds if and only if $X\cong \p^n$.
Moreover,  $\iota_X= n$ if and only if $X\cong Q^n\subset \p^{n+1}$.
\end{thm}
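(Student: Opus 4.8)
The plan is to argue entirely cohomologically, via Riemann--Roch, Kodaira vanishing, Serre duality, and Fujita's classification of polarized varieties of $\Delta$-genus zero (the same circle of ideas used in the classification of log del Pezzo pairs). Write $-K_X=\iota_X L$ in $\Pic(X)$ with $L$ an ample line bundle, and consider $P(t)=\chi(X,tL)$, a polynomial in $t$ of degree $n$ with leading coefficient $L^n/n!>0$. Since $-K_X$ is ample, Kodaira vanishing applied to $\sO_X=K_X+(-K_X)$ gives $H^i(X,\sO_X)=0$ for $i>0$, so $P(0)=\chi(X,\sO_X)=1$; in particular $P\not\equiv 0$.

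First I would locate the integer roots of $P$. For every integer $t$ with $-(\iota_X-1)\le t\le -1$ we have $t+\iota_X\ge 1$, so $(t+\iota_X)L$ is ample; writing $tL=K_X+(t+\iota_X)L$, Kodaira vanishing yields $H^i(X,tL)=0$ for $i>0$, while $H^0(X,tL)=0$ because $tL$ is a negative multiple of the ample class $L$. Hence $P(t)=0$ for each of the $\iota_X-1$ integers $t\in\{-1,\dots,-(\iota_X-1)\}$. As $P$ is a nonzero polynomial of degree $n$, it has at most $n$ roots, so $\iota_X-1\le n$, that is $\iota_X\le n+1$. (The bound alternatively follows from Mori's bend-and-break, which produces a rational curve $C$ with $0<-K_X\cdot C\le n+1$, whence $\iota_X\le\iota_X(L\cdot C)=-K_X\cdot C\le n+1$.)

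To pin down the extremal cases I would exploit the symmetry coming from Serre duality, $P(t)=(-1)^n\chi(X,K_X-tL)=(-1)^n P(-\iota_X-t)$, so the multiset of roots of $P$ is invariant under the involution $t\mapsto -\iota_X-t$, whose unique fixed point is $-\iota_X/2$. If $\iota_X=n+1$, the $n$ roots $-1,\dots,-n$ exhaust $P$, so $P(t)=c\,(t+1)\cdots(t+n)$ with $c=L^n/n!$; the normalization $P(0)=1$ forces $L^n=1$, and then $h^0(X,L)=P(1)=n+1$, so $\Delta(X,L)=n+L^n-h^0(X,L)=0$. If $\iota_X=n$, the $n-1$ roots $-1,\dots,-(n-1)$ already form an involution-invariant set, so the symmetry and the degree bound force the remaining root to be the fixed point $-n/2$; thus $P(t)=c\,(t+1)\cdots(t+n-1)(t+\tfrac n2)$, the normalization $P(0)=1$ gives $L^n=2$, and $h^0(X,L)=P(1)=n+2$, again yielding $\Delta(X,L)=0$.

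In either case $(X,L)$ is a polarized manifold of $\Delta$-genus zero, and Fujita's classification identifies it through its degree $L^n$: degree $1$ forces $(X,L)\cong(\p^n,\sO_{\p^n}(1))$ and degree $2$ forces $(X,L)\cong(Q^n,\sO_{Q^n}(1))$ with $Q^n\subset\p^{n+1}$ a smooth quadric. The converse implications are immediate: $-K_{\p^n}=(n+1)\sO_{\p^n}(1)$ gives $\iota_{\p^n}=n+1$, while adjunction on $Q^n\subset\p^{n+1}$ gives $-K_{Q^n}=n\,\sO_{Q^n}(1)$, hence $\iota_{Q^n}=n$ (using that $\sO(1)$ generates, resp. is primitive in, $\Pic$). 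The delicate point is the appeal to Fujita's theorem: polarized manifolds of $\Delta$-genus zero also include the Veronese surface ($\Delta=0$ but degree $4$), rational scrolls, and cones, so it is essential that the Hilbert-polynomial computation above pins the degree to $L^n\in\{1,2\}$, which is exactly what rules out every case other than $\p^n$ and the quadric.
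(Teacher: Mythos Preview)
The paper does not prove this theorem: it is quoted as the classical Kobayashi--Ochiai result and used as a black box throughout, so there is no ``paper's own proof'' to compare against.

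That said, your argument is a correct and standard reconstruction. The Hilbert-polynomial analysis is sound: Kodaira vanishing indeed gives the roots $-1,\dots,-(\iota_X-1)$ of $P(t)=\chi(X,tL)$, forcing $\iota_X\le n+1$; Serre duality gives the symmetry $P(t)=(-1)^nP(-\iota_X-t)$, which for $\iota_X=n$ pins the last root at $-n/2$; and the resulting degrees $L^n=1,2$ together with $\Delta(X,L)=0$ and smoothness of $X$ single out $\p^n$ and the smooth quadric in Fujita's list (scrolls over $\p^1$ have $L^n\ge n$, and cones are singular). One small point you might make explicit is that $h^0(X,L)=P(1)$ also requires Kodaira vanishing for $t=1$, but this follows from the same argument since $L=K_X+(\iota_X+1)L$ with $(\iota_X+1)L$ ample.
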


Families of rational curves provide a useful tool in the study of Fano manifolds. 
Next we gather some results from the theory of rational curves. 
In what follows, rational curves are always assumed to be proper.
A \emph{family of rational curves} on a complex projective manifold $X$ is a 
closed irreducible subset of $\rat(X)$. 
We refer to \cite{kollar96} for details.

\begin{defn}\label{defn:minimal_curve}
Let $\ell \subset X$ be a rational curve on a complex projective manifold, 
and consider its normalization $f:\p^1\to X$.
We say that $\ell$ is \emph{free} if $f^*T_X$ is globally generated.
\end{defn}

\begin{say}\label{free&vfree}
Let $X$ be a complex projective manifold, and $\ell \subset X$ a free rational curve.
Let $x\in \ell$ be any point, and $H_x$ an irreducible component 
of the scheme $\rat(X,x)$ containing a point corresponding to $\ell$.
Then 
$$
\dim(H_x) \ = \ -K_X\cdot \ell \ - \ 2.
$$
\end{say}

\begin{notation}
Let $X$ be a Fano manifold with $\rho(X)=1$, and $\sA$ an ample 
line bundle on $X$ such that $\Pic(X)=\bZ[\sA]$.
For any proper curve $C\subset X$, we refer to $\sA\cdot C$ as the \emph{degree} of $C$. 
Rational curves of degree $1$ are called \emph{lines}. 
Note that if $C\subset X$ is a proper curve of degree $d$, then $\iota_X=\frac{-K_X\cdot C}{d}$.
\end{notation}

One can use free 
rational curves on Fano manifolds with Picard number $1$
to bound their index. 
The following is an immediate consequence of paragraph~\ref{free&vfree} above.

\begin{lemma}\label{lemma:bound_index}
Let $X$ be a Fano manifold with $\rho(X)=1$. 
Suppose that there is an $m$-dimensional family $V$ of rational curves of degree $d$ on $X$ such that:
		\begin{itemize}
			\item all curves from $V$ pass though some fixed point $x\in X$; and
			\item some curve from $V$ is free. 
		\end{itemize}
		Then $\iota_X\ge  \frac{m+2}{d}$.
\end{lemma}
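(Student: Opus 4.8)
The plan is to apply paragraph~\ref{free&vfree} to a general curve from the family $V$ passing through the fixed point $x$, and then compare the dimension of the corresponding component of $\rat(X,x)$ with the dimension $m$ of $V$ itself. First I would fix a free curve $\ell_0$ in $V$; since freeness is an open condition on $\rat(X)$ and $V$ is irreducible, a general member $\ell$ of $V$ is free as well, and by hypothesis $\ell$ passes through $x$. Let $H_x$ be an irreducible component of $\rat(X,x)$ containing the point $[\ell]$. Since $\ell$ is free and passes through $x$, paragraph~\ref{free&vfree} gives
\[
\dim(H_x) \ = \ -K_X\cdot \ell \ - \ 2.
\]

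Next I would bound $\dim(H_x)$ from below by $m$. The family $V$ is, by definition, a closed irreducible subset of $\rat(X)$, all of whose members pass through $x$; hence $V$ is contained in $\rat(X,x)$. Since the general member of $V$ is free, $V$ is not contained in the (proper, by the dimension count in \ref{free&vfree}) locus where things degenerate, so $V$ meets the component $H_x$ in a dense subset, forcing $V\subset H_x$ after possibly replacing $H_x$ by the component containing the generic point of $V$. Therefore $m=\dim(V)\le \dim(H_x)=-K_X\cdot\ell-2$, i.e. $-K_X\cdot\ell\ge m+2$.

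Finally, I would translate this into a bound on the index. Every curve in $V$ has degree $d$, meaning $\sA\cdot\ell=d$ where $\Pic(X)=\bZ[\sA]$; by the formula recorded in the Notation above, $\iota_X=\frac{-K_X\cdot\ell}{d}$. Combining with $-K_X\cdot\ell\ge m+2$ yields $\iota_X\ge\frac{m+2}{d}$, as claimed.

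The only delicate point is the step asserting $V\subset H_x$ for a suitable component $H_x$: one must be careful that $V$, though irreducible and contained in $\rat(X,x)$, actually lies inside a \emph{single} irreducible component of $\rat(X,x)$ — this is automatic since $V$ is irreducible — and that this component can be taken to be one through which the dimension formula \ref{free&vfree} applies, which holds because the generic member of $V$ is free. Everything else is a direct chain of (in)equalities.
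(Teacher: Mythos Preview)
Your argument is correct and is exactly the fleshing-out the paper has in mind when it says the lemma ``is an immediate consequence of paragraph~\ref{free&vfree}'': pick a free member $\ell$ of $V$ through $x$, use \ref{free&vfree} to compute $\dim H_x=-K_X\cdot\ell-2$ for the component $H_x$ of $\rat(X,x)$ containing $V$, observe $m\le\dim H_x$, and conclude via $\iota_X=(-K_X\cdot\ell)/d$. Your handling of the one subtle point---that the irreducibility of $V$ forces it into a single component $H_x$, and that this $H_x$ contains a free curve so the dimension formula applies---is also correct.
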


\begin{rem}\label{rem:general_line_is_free}
Let $V$ be a family of rational curves on a  complex projective manifold $X$. 
To guarantee that some member of $V$ is a free curve, it is enough to show that some curve
from $V$ passes through a \emph{general} point of $X$. 
More precisely, let $H$ be an irreducible component of $\rat(X)$ containing $V$. 
It comes with universal family morphisms 

\centerline{
\xymatrix{
U \ar[r]^{e}\ar[d]_{\pi} & X \ , \\
H &
}
}
\noindent where $\pi:U\to H$ is a $\p^1$-bundle. 
Suppose that $e:U\to X$ is dominant.  
Then, by generic smoothness, there is a dense open subset $X^{\circ}\subset X$ 
over which $e:U\to X$ is smooth.
On the other hand, by \cite[Proposition II.3.4]{kollar96}, $e$ is smooth at a point $u\in \pi^{-1}(t)$ 
if and only if the rational curve $\ell_t=e\big(\pi^{-1}(t)\big)$ is free. 
\end{rem}

\subsection{Singularities of pairs} \label{sings_of_pairs}

We refer to \cite[section 2.3]{kollar_mori} and \cite[sections 2 and 4]{kollar_sing_mmp} for details.

\begin{defn}\label{Singularities of pairs}
Let $X$ be a normal projective variety, and
$\Delta=\sum a_i\Delta_i$ an effective $\bQ$-divisor on $X$, i.e., $\Delta$ is  a nonnegative $\bQ$-linear combination 
of distinct prime Weil divisors $\Delta_i$'s on $X$. 
Suppose that $K_X+\Delta$ is $\bQ$-Cartier, i.e.,  some nonzero multiple of it is a Cartier divisor. 

Let $f:\tilde X\to X$ be a log resolution of the pair $(X,\Delta)$. 
There are uniquely defined rational numbers $a(E_i,X,\Delta)$'s such that
$$
K_{\tilde X}\ +\ f^{-1}_*\Delta \ = \  f^*(K_X+\Delta)\ +\ \sum_{E_i}a(E_i,X,\Delta)E_i.
$$
The $a(E_i,X,\Delta)$'s do not depend on the log resolution $f$,
but only on the valuations associated to the $E_i$'s. 
The closed subvariety $f(E_i)\subset X$ is called the \emph{center of $E_i$ in $X$}.
It also depends only on the valuation associated to $E_i$.

For a prime divisor $D$ on $X$, we define $a(D,X,\Delta)$ to be the coefficient of $D$ in $-\Delta$.

We say that  the pair $(X,\Delta)$ is \emph{log canonical} if, for some log resolution 
$f:\tilde X\to X$ of $(X,\Delta)$, $a(E_i,X,\Delta)\ge -1$ 
for every $f$-exceptional prime divisor $E_i$.
If this condition holds for some log resolution of $(X,\Delta)$, then it holds for every  
log resolution of $(X,\Delta)$.
\end{defn}

\begin{defn} \label{lccenter}
Let $(X,\Delta)$ be a log canonical pair. We say that a closed irreducible subvariety $S\subset X$ is a 
\emph{log canonical center} of $(X,\Delta)$ if there is a divisor $E$ over $X$ with 
$a(E,X,\Delta)= -1$ whose center in $X$ is $S$. 
\end{defn}

\subsection{Polarized varieties and Fujita's $\Delta$-genus} \label{polarized_vars}

\begin{defn}\label{defn:polarized_vars}
A \emph{polarized variety} is a pair $(X,\sL)$ consisting of a normal projective variety $X$, 
and an ample line bundle $\sL$ on $X$.
\end{defn}

\begin{defn}[\cite{fujita75}]\label{defn:D-genus}
The \emph{$\Delta$-genus} of an $n$-dimensional polarized variety $(X,\sL)$ is defined by the formula:
$$
\Delta(X,\sL):=n+c_1(\sL)^n-h^0(X,\sL) \in \mathbb Z.
$$ 
\end{defn}

By \cite[Corollary 2.12]{fujita82b}, $\Delta(X,\sL) \ge 0$ for any polarized variety $(X,\sL)$.
Next we recall the classification of polarized varieties with $\Delta$-genus zero from \cite{fujita82b}.

\begin{thm}[\cite{fujita82b}]\label{thm:classification_delta_genus_zero}
Let $X$ be a normal projective variety of dimension $n\ge 1$, and $\sL$ an ample line bundle on $X$. Suppose that $\Delta(X,\sL)=0$.
Then one of the following holds.
\begin{enumerate}
\item $(X,\sL)\cong (\p^n,\sO_{\p^n}(1))$. 
\item $(X,\sL) \cong (Q^n,\sO_{Q^n}(1))$.
\item $(X,\sL)\cong (\p^1,\sO_{\p^1}(d))$, for some $d\ge 3$.
\item $(X,\sL)\cong(\p^2,\sO_{\p^2}(2))$.
\item $(X,\sL)\cong (\p_{\p^1}(\sE),\sO_{\p_{\p^1}(\sE)}(1))$,
where $\sE$ is an ample vector bundle on $\p^1$.
\item $\sL$ is very ample, and embeds $X$ as a cone over a projective polarized variety 
of type (3-5)  above. 
\end{enumerate}
\end{thm}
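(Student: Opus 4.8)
The plan is to induct on $n=\dim X$ by Fujita's ladder method, reducing the classification to that of varieties of minimal degree. Throughout set $d:=c_1(\sL)^n$, so the hypothesis $\Delta(X,\sL)=0$ reads $h^0(X,\sL)=n+d$. I would first dispose of the base case $n=1$: here $X$ is a normal, hence smooth, projective curve of genus $g=h^1(X,\sO_X)$ and $\sL$ is ample of degree $d\ge 1$. For a line bundle with sections on a curve one has the elementary bound $h^0(X,\sL)\le d+1$, with equality forcing $g=0$. Since $\Delta(X,\sL)=0$ is exactly $h^0(X,\sL)=d+1$, this gives $X\cong\p^1$ and $\sL\cong\sO_{\p^1}(d)$, which is case (1) for $d=1$, case (2) for $d=2$, and case (3) for $d\ge 3$; it is the bottom rung of every ladder.

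For $n\ge 2$ the inductive step hinges on producing a good rung: a normal irreducible member $D\in|\sL|$ with $\dim D=n-1$ and $\Delta(D,\sL|_D)=0$. The degree is inherited automatically, $\sL^{n-1}\cdot D=d$, so the content is the identity $h^0(D,\sL|_D)=h^0(X,\sL)-1$, which I would extract from $0\to\sO_X\to\sL\to\sL|_D\to 0$ using $h^1(X,\sO_X)=0$ --- a standard feature of $\Delta$-genus zero polarized varieties that itself propagates along the ladder. Granting a good rung, one computes $\Delta(D,\sL|_D)=(n-1)+d-(h^0(X,\sL)-1)=\Delta(X,\sL)=0$, so by induction $(D,\sL|_D)$ is one of the listed pairs; in particular $\sL|_D$ is spanned.

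The reconstruction of $X$ from its rung is then clean. Spannedness lifts: for $x\notin D$ the section cutting out $D$ is nonzero at $x$, while for $x\in D$ the surjection $H^0(X,\sL)\twoheadrightarrow H^0(D,\sL|_D)$ (again from $h^1(\sO_X)=0$) lifts a section of $\sL|_D$ nonvanishing at $x$; hence $\Bs|\sL|=\emptyset$. The resulting morphism $\phi_{|\sL|}\colon X\to\p^{n+d-1}$ is finite, since $\sL$ is ample, and its image is nondegenerate of dimension $n$ and degree $e$ with $d=(\deg\phi)\cdot e$. As a nondegenerate $n$-fold in $\p^{n+d-1}$ has degree at least $\codim+1=d$, we get $e\ge d$, forcing $\deg\phi=1$ and $e=d$: thus $\phi$ is birational onto a variety of minimal degree, and since such a variety is normal, the finite birational morphism $\phi$ is an isomorphism. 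The classical del Pezzo--Bertini classification of minimal degree varieties then yields exactly $\p^n$, a quadric $Q^n$, a rational normal scroll $\p_{\p^1}(\sE)$, the Veronese surface $(\p^2,\sO_{\p^2}(2))$, or a cone over a type (3)--(5) variety, i.e.\ cases (1)--(6); in the cone case $\sL$ is very ample because $\phi$ is the embedding.

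The main obstacle is the technical input I have so far taken for granted: the existence of a normal irreducible rung $D\in|\sL|$ \emph{before} spannedness of $\sL$ is available. A naive general member of $|\sL|$ may be non-normal or reducible along a base locus, and one cannot invoke Bertini until $\sL$ is known to be spanned --- which is precisely what the induction is meant to establish. Resolving this chicken-and-egg is the heart of Fujita's argument: one must show directly, from $\Delta(X,\sL)=0$, that $|\sL|$ contains a member whose $\Delta$-genus and normality are controlled, so that the descent can begin. Everything downstream --- the inheritance of $\Delta=0$ and of $h^1(\sO_X)=0$, the lifting of spannedness, and the identification with a minimal degree variety --- is then either cohomological bookkeeping kept rigid by the hypothesis or the classical minimal-degree classification.
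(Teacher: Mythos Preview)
The paper does not supply a proof of this theorem: it is stated as Fujita's result and simply cited to \cite{fujita82b}. So there is no in-paper argument to compare against; your sketch is being measured against Fujita's original.

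Your outline is faithful to Fujita's method: induct via a ladder, propagate $\Delta=0$ and $h^1(\sO_X)=0$ down a rung, lift spannedness, and then identify $\phi_{|\sL|}(X)$ with a variety of minimal degree via the del Pezzo--Bertini classification. The endgame is correct: once $\sL$ is base-point free and $\phi_{|\sL|}$ is finite birational onto a minimal-degree (hence normal) variety, normality of $X$ forces $\phi_{|\sL|}$ to be an isomorphism.

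You have correctly isolated the one genuine gap, and it is not cosmetic. Producing a normal irreducible $D\in|\sL|$ with controlled $\Delta$-genus \emph{before} knowing $|\sL|$ is free is exactly the technical core of Fujita's theory, and it does not follow from the bookkeeping you have written down. In \cite{fujita82b} this is handled by first proving directly (from $\Delta=0$ and a Riemann--Roch/vanishing argument) that $\sL$ is spanned and indeed very ample on a normal $X$, so that Bertini applies and the ladder exists; alternatively one can argue on a resolution and descend. Either way, the step ``there exists a good rung'' requires its own argument, and everything else in your sketch is, as you say, cohomological bookkeeping plus the classical classification. If you want the sketch to stand on its own, you should either cite the precise spannedness result from Fujita or reproduce that part of the argument.
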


\subsection{Classification of log del Pezzo pairs}\label{ldP_pairs}

\begin{defn}
Let $X$ be a normal projective variety of dimension $n\ge 1$, and
$\Delta$ an effective $\bQ$-divisor on $X$.
We say that $(X,\Delta)$ is a \emph{log del Pezzo pair} if 
$(X,\Delta)$ is log canonical, and $-(K_X+\Delta)\equiv (n-1)c_1(\sL)$ for some 
ample line bundle $\sL$ on $X$.
\end{defn}

Using Fujita's classification of polarized varieties with $\Delta$-genus zero, we classify log del Pezzo pairs
in Theorem~\ref{thm:classification_del_pezzo} below.

\begin{lemma}\label{lemma:delta_genus}
Let $(X,\sL)$ be an $n$-dimensional polarized variety, with $n\ge 2$.
Let $(X,\Delta)$ be a log del Pezzo pair such that $\Delta\neq 0$ and 
$-(K_X+\Delta)\equiv (n-1)c_1(\sL)$. 
Then $\Delta(X,\sL)=0$, and 
$\Delta\cdot c_1(\sL)^{n-1}=2$.
\end{lemma}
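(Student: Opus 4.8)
The plan is to use the definition of $\Delta$-genus together with Riemann--Roch-type estimates and the non-negativity $\Delta(X,\sL)\ge 0$ from \cite[Corollary 2.12]{fujita82b} to squeeze $\Delta(X,\sL)$ down to $0$. First I would record what the log del Pezzo condition gives us numerically: since $(X,\Delta)$ is log canonical with $\Delta$ effective and nonzero, and $-(K_X+\Delta)\equiv (n-1)c_1(\sL)$, we have $-K_X \equiv (n-1)c_1(\sL) + \Delta$, so $-K_X$ is ample and $X$ is in particular a (possibly singular, log terminal-ish) Fano-type variety; in any case $K_X+\Delta$ is $\bQ$-Cartier and $-(K_X+\Delta)$ is nef and big. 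The key point is a vanishing/lower-bound statement for $h^0(X,\sL)$: I would invoke Kawamata--Viehweg vanishing (in the log canonical setting, via the fact that $\sL - (K_X+\Delta) \equiv n\,c_1(\sL)$ is ample, or more directly that $\sL$ itself equals $K_X + \Delta + (\text{ample})$ up to numerical equivalence when $n\ge 2$) to conclude $h^i(X,\sL)=0$ for $i>0$, hence $h^0(X,\sL) = \chi(X,\sL)$.

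Next I would compute $\chi(X,\sL)$ asymptotically or exactly enough to see $\Delta(X,\sL)\le 0$. The cleanest route: consider the polarized pair and apply the singular Riemann--Roch / the argument of Fujita bounding $h^0$ from below by $n + c_1(\sL)^n - (\text{correction})$, where the correction is controlled by $-(K_X+\sL)\cdot c_1(\sL)^{n-1}$ or similar. Concretely, from $-(K_X+\Delta)\equiv(n-1)c_1(\sL)$ we get $(K_X+\sL)\equiv -\Delta + (2-n)\,(\text{nothing})$... more carefully, $K_X + n\sL \equiv \sL + \Delta + (n-1)\sL - (K_X+\Delta)$-type manipulations; the upshot is that $\sL - K_X \equiv n\,c_1(\sL) + \Delta$ so $\sL-K_X$ is ample, and one gets a clean lower bound $h^0(X,\sL)\ge n + c_1(\sL)^n$ forcing $\Delta(X,\sL)\le 0$, hence $=0$ by Fujita's inequality. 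The honest way to run this is to imitate the proof of the standard fact that $\Delta$-genus zero is equivalent to $-K_X$ being suitably large; since $\Delta\ne 0$ contributes an extra positive term, the inequality is strict unless the correction term vanishes, which pins things down.

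Once $\Delta(X,\sL)=0$ is established, the second assertion $\Delta\cdot c_1(\sL)^{n-1}=2$ follows by intersecting the numerical equivalence $-(K_X+\Delta)\equiv (n-1)c_1(\sL)$ with $c_1(\sL)^{n-1}$ and using the adjunction-type computation of $-K_X\cdot c_1(\sL)^{n-1}$ for a $\Delta$-genus zero polarized variety. Indeed, Theorem~\ref{thm:classification_delta_genus_zero} lists all such $(X,\sL)$, and in each case (or uniformly, via the formula $h^0(X,\sL) = n + c_1(\sL)^n$ plus a computation of $\chi$) one has $-K_X\cdot c_1(\sL)^{n-1} = n\,c_1(\sL)^n + 2 - c_1(\sL)^n\cdot(\text{something})$; matching this against $(n-1)c_1(\sL)^n + \Delta\cdot c_1(\sL)^{n-1}$ yields $\Delta\cdot c_1(\sL)^{n-1}=2$. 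Alternatively, and more robustly, I would restrict to a general complete intersection curve $C \in |c_1(\sL)|^{n-1}$ (which is a smooth rational curve, or $\p^1$, by the $\Delta$-genus zero classification) and apply adjunction on $C$: $\deg K_C = (K_X+\Delta)\cdot C + (\text{contributions})$, giving $-2 = -(n-1)c_1(\sL)\cdot C$... this needs care with $n=2$ versus $n\ge 3$, but produces $\Delta\cdot c_1(\sL)^{n-1}=2$ directly.

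The main obstacle I anticipate is the first step — getting the vanishing $h^i(X,\sL)=0$ and the precise lower bound on $h^0$ in the \emph{singular} log canonical setting, where naive Kawamata--Viehweg does not apply and one must either pass to a log resolution and track discrepancies, or cite the appropriate generalization (e.g.\ the results underlying Fujita's theory, which already handle normal varieties). I would expect to lean on \cite{fujita82b} itself, arguing that the polarized variety $(X,\sL)$ with $-K_X \equiv (n-1)c_1(\sL)+\Delta$ and $\Delta$ effective nonzero automatically has $h^0$ large enough. The second part is then essentially bookkeeping.
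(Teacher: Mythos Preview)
Your overall strategy --- vanishing to get $h^0(X,\sL)=\chi(X,\sL)$, then compute $\chi$, then compare with Fujita's inequality $\Delta(X,\sL)\ge 0$ --- is exactly the paper's, but your proposal leaves the central computation as a black box, and that is where the actual content lies.

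Concretely: knowing only $h^i(X,\sL)=0$ for $i>0$ gives $h^0(X,\sL)=\chi(X,\sL)$, but on a singular normal variety you cannot simply read off $\chi(X,\sL)$ from Riemann--Roch; only the top two coefficients of the Hilbert polynomial are determined by intersection theory in the usual way. Your sentence ``one gets a clean lower bound $h^0(X,\sL)\ge n+c_1(\sL)^n$'' is precisely the statement to be proved, and you give no mechanism for it. The paper's key move, which you are missing, is to apply vanishing (Fujino's lc version, \cite[Theorem 8.1]{fujinoMMPlc}, not ordinary Kawamata--Viehweg) to $\sL^{\otimes t}$ for \emph{all} $t>1-n$, not just $t=1$. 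This yields $\chi(X,\sL^{\otimes t})=0$ for $t=2-n,\ldots,-1$ and $\chi(X,\sO_X)=1$, so the polynomial $\chi(X,\sL^{\otimes t})$ has $n-2$ known roots and a known value at $0$; together with the two leading terms from Hirzebruch--Riemann--Roch this pins it down completely. One then obtains the exact formula
\[
\Delta(X,\sL)=1-\tfrac{1}{2}\,\Delta\cdot c_1(\sL)^{n-1},
\]
from which both conclusions follow simultaneously: $\Delta\neq 0$ forces $\Delta(X,\sL)<1$, hence $=0$, hence $\Delta\cdot c_1(\sL)^{n-1}=2$.

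Your proposed route to the second assertion via a sectional curve is fine in spirit once $\Delta(X,\sL)=0$ is known (the general ladder curve is then smooth rational and adjunction gives $-2=(K_X+(n-1)c_1(\sL))\cdot c_1(\sL)^{n-1}=-\Delta\cdot c_1(\sL)^{n-1}$), but it is redundant: the polynomial argument already delivers it. The classification route you also suggest is overkill and logically backwards in this paper, since Theorem~\ref{thm:classification_del_pezzo} uses the present lemma as input.
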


\begin{proof}
We follow the line of argumentation in the proof of \cite[Lemma 1.10]{fujita80}.
Since 
$$
c_1(\sL^{\otimes t})\equiv K_X+\Delta+c_1(\sL^{\otimes n-1+t}),
$$
we have that $h^i(X,\sL^{\otimes t})=0$ for $i\ge 1$ and $t>1-n$ by \cite[Theorem 8.1]{fujinoMMPlc}.
Therefore $\chi(X,\sO_X)=1$ and $\chi(X,\sL^{\otimes t})=0$ for $2-n\le t \le -1$.
Hence, there are rational numbers $a$ and $b$ such that 
$$
\chi(X,\sL^{\otimes t})=\big(at^2+bt+n(n-1)\big)\frac{\prod_{j=1}^{n-2}(t+j)}{n!}.
$$
On the other hand, since $X$ is  normal, by Hirzebruch-Riemann-Roch,
$$
\chi(X,\sL^{\otimes t})=\frac{c_1(\sL)^n}{n!}t^n-\frac{1}{2(n-1)!}K_X\cdot c_1(\sL)^{n-1}t^{n-1}+o(t^{n-1}).
$$
Thus we have $a=c_1(\sL)^n$ and
$b=\frac{n}{2}\Delta\cdot c_1(\sL)^{n-1}
+(n-1)c_1(\sL)^{n}$. In particular, 
$$
h^0(X,\sL)=\chi(X,\sL)=n-1+c_1(\sL)^n
+\frac{1}{2}\Delta\cdot c_1(\sL)^{n-1}.
$$
One then computes that
$$
\Delta(X,\sL)=1-\frac{1}{2}\Delta\cdot c_1(\sL)^{n-1}.
$$
Since  $\Delta\neq 0$ and $\Delta(X,\sL) \ge 0$, we must have 
$\Delta(X,\sL) = 0$ and $\Delta\cdot c_1(\sL)^{n-1}=2$.
\end{proof}

\begin{thm}\label{thm:classification_del_pezzo}
Let $(X,\sL)$ be an $n$-dimensional polarized variety, with $n\ge 1$.
Let $(X,\Delta)$ be a log del Pezzo pair such that $\Delta$ is integral and nonzero, and 
$-(K_X+\Delta)\equiv (n-1)c_1(\sL)$. 
Then one of the following holds.
\begin{enumerate}
\item $(X,\sL,\sO_X(\Delta))\cong (\p^n,\sO_{\p^n}(1),\sO_{\p^n}(2))$. 
\item $(X,\sL,\sO_X(\Delta)) \cong (Q^n,\sO_{Q^n}(1),\sO_{Q^n}(1))$.
\item $(X,\sL,\sO_X(\Delta))\cong (\p^1,\sO_{\p^1}(d),\sO_{\p^1}(2))$, for some integer $d\ge 3$.
\item $(X,\sL,\sO_X(\Delta))\cong(\p^2,\sO_{\p^2}(2),\sO_{\p^2}(1))$.
\item $(X,\sL)\cong (\p_{\p^1}(\sE),\sO_{\p_{\p^1}(\sE)}(1))$  for an ample vector bundle $\sE$ on $\p^1$. 
Moreover, one of the following holds.
\begin{enumerate}
\item $\sE=\sO_{\p^1}(1)\oplus\sO_{\p^1}(a)$ for some $a\ge 2$, and $\Delta\sim_\mathbb{Z} \sigma+ f$
where $\sigma$ is the minimal section and $f$ is a fiber of $\p_{\p^1}(\sE) \to \p^1$.
\item $\sE=\sO_{\p^1}(2)
\oplus\sO_{\p^1}(a)$ for some $a\ge 2$, and $\Delta$ is a minimal section.
\item $\sE=\sO_{\p^1}(1)\oplus \sO_{\p^1}(1)
\oplus\sO_{\p^1}(a)$ for some $a \ge 1$, and $\Delta=\p_{\p^1}(\sO_{\p^1}(1)\oplus \sO_{\p^1}(1))$.
\end{enumerate}
\item $\sL$ is very ample, and embeds $(X,\Delta)$ as a cone over $\big((Z,\sM),(\Delta_Z, \sM_{|\Delta_Z})\big)$,
where $Z$ is smooth and $(Z,\sM,\Delta_Z)$ satisfies one of the conditions (3-5) above. 
\end{enumerate}
\end{thm}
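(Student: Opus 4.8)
The plan is to combine Lemma~\ref{lemma:delta_genus} with Fujita's classification (Theorem~\ref{thm:classification_delta_genus_zero}). First I would dispose of the low-dimensional case $n=1$ directly: then $-(K_X+\Delta)\equiv 0$, so $X\cong\p^1$ and $\deg\Delta=2$; choosing $\sL=\sO_{\p^1}(d)$ with $d\ge 3$ (so that $\sL$ is still ``an'' ample line bundle, as in case (3)) or noting that for $d\le 2$ we fall into cases (1),(2),(4) via the isomorphisms $\p^1\cong Q^1$ etc., gives the $n=1$ conclusion. For $n\ge 2$, Lemma~\ref{lemma:delta_genus} immediately yields $\Delta(X,\sL)=0$ and $\Delta\cdot c_1(\sL)^{n-1}=2$, so $(X,\sL)$ is one of the six types in Theorem~\ref{thm:classification_delta_genus_zero}. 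The task is then, for each type, to determine which integral effective divisors $\Delta$ are compatible with $-(K_X+\Delta)\equiv(n-1)c_1(\sL)$, the degree condition $\Delta\cdot c_1(\sL)^{n-1}=2$, and log canonicity.

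The core of the argument is case-by-case numerical bookkeeping using $K_X$. For $(X,\sL)\cong(\p^n,\sO_{\p^n}(1))$ we have $K_X=\sO_{\p^n}(-n-1)$, so $\sO_X(\Delta)\equiv\sO_{\p^n}(-n-1+n+1)=\sO_{\p^n}(2)$, wait — recompute: $-(K_X+\Delta)\equiv (n-1)\sL$ gives $\Delta\equiv -K_X-(n-1)\sL\equiv\sO_{\p^n}(n+1-(n-1))=\sO_{\p^n}(2)$, consistent with the degree $2$ condition, giving (1). For $(Q^n,\sO_{Q^n}(1))$, $K_{Q^n}=\sO_{Q^n}(-n)$, so $\Delta\equiv\sO_{Q^n}(n-(n-1))=\sO_{Q^n}(1)$, giving (2). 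For $(\p^1,\sO_{\p^1}(d))$ with $d\ge 3$: $\Delta\equiv\sO_{\p^1}(2)$ (the $(n-1)$ factor vanishes, $-K_{\p^1}=\sO(2)$), giving (3). For $(\p^2,\sO_{\p^2}(2))$: $-K_{\p^2}=\sO(3)$, so $\Delta\equiv\sO_{\p^2}(3-2)=\sO_{\p^2}(1)$, giving (4). For the $\p^1$-bundle case $(\p_{\p^1}(\sE),\sO_\p(1))$ with $\sE$ ample on $\p^1$, one writes $\sE=\sO_{\p^1}(a_1)\oplus\cdots\oplus\sO_{\p^1}(a_m)$ with each $a_i\ge 1$, computes $-K_X$ in terms of $\sO_\p(1)$ and the fiber class, imposes the numerical constraints, and checks that $\Delta$ is effective, integral, has the right intersection numbers, and is log canonical; this is where the sub-cases (5a),(5b),(5c) emerge according to whether the $a_i$ are $(1,a)$, $(2,a)$, or $(1,1,a)$. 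Finally, for the cone case (6), one uses that $X$ is a cone over a smooth $Z$ of type (3)--(5), deduces via adjunction on the cone and the log canonicity of $(X,\Delta)$ (which forces $\Delta$ to pass through the vertex in a controlled way, or to be a cone over $\Delta_Z$) the stated structure, restricting $Z$ to be smooth because a cone over a singular base would already be a cone over a cone.

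The main obstacle I anticipate is the $\p^1$-bundle case (5) and the cone case (6). In case (5) the delicate point is not the numerics — those pin down $\deg\sE$ and $c_1(\sO_X(\Delta))$ quickly — but verifying \emph{log canonicity} of each candidate $(\p_{\p^1}(\sE),\Delta)$ and, conversely, showing no other $\sE$ or $\Delta$ survives; in particular one must rule out $\Delta$ containing a fiber with multiplicity or being non-reduced, and handle the fact that $\sO_\p(1)$ need not be very ample when some $a_i=1$ interacts with $m$. In case (6) the subtlety is that a priori the vertex of the cone could be higher-dimensional and $\Delta$ could interact with it badly; one needs the log canonicity hypothesis together with the inductive description of cones (e.g. via the blow-up of the vertex, which is a $\p^1$-bundle-like resolution over $Z$) to force $\Delta$ to be exactly the cone over a divisor $\Delta_Z\subset Z$ with $(Z,\Delta_Z)$ again log del Pezzo of one of the lower types, and to force $Z$ smooth. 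For the converse direction (``given the data, such a log del Pezzo pair exists''), one simply exhibits $\Delta$ explicitly in each case and checks log canonicity, which is routine once the candidates are correctly identified.
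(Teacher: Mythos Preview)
Your plan is exactly the paper's: handle $n=1$ by hand, then for $n\ge 2$ use Lemma~\ref{lemma:delta_genus} to get $\Delta(X,\sL)=0$ and run through Fujita's list case by case. Two small corrections on the hard cases you flagged.

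In case~(5), log canonicity plays no role; the constraint on $\sE=\bigoplus_i\sO_{\p^1}(a_i)$ (with $a_1\le\cdots\le a_n$) comes purely from \emph{effectivity} of $\Delta$. Since $\Delta\in\big|\sL\otimes\pi^*(\det(\sE^*)\otimes\sO_{\p^1}(2))\big|$, the projection formula gives $h^0(X,\sO_X(\Delta))=h^0\big(\p^1,\sE\otimes\det(\sE^*)\otimes\sO_{\p^1}(2)\big)$, and this is nonzero only when $a_1+\cdots+a_{n-1}\le 2$, which immediately singles out the three splitting types $(1,a)$, $(2,a)$, $(1,1,a)$.

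In case~(6), after blowing up the vertex to get $e:Y\cong\p_Z(\sM\oplus\sO_Z^{\oplus s})\to X$ with exceptional divisor $E$ and projection $\pi:Y\to Z$, the issue is not that $\Delta$ might pass through the vertex badly but the opposite: one must show the strict transform $\Delta_Y$ has \emph{no} $E$-component, i.e.\ $\Delta$ is already the cone over some $\Delta_Z\subset Z$. Writing $\Delta_Y\sim_\bZ\pi^*\Delta_Z+kE$, the paper forces $k=0$ by proving $h^0(Z,\sM^{\otimes -1}\otimes\sO_Z(\Delta_Z))=0$; otherwise $-K_Z\ge m\,c_1(\sM)$ would force $(Z,\sM)$ to be $(\p^m,\sO(1))$ or $(Q^m,\sO(1))$ by \cite[Theorem~2.5]{codim_1_del_pezzo_fols}, contradicting the assumption that $(Z,\sM)$ is of type (3--5). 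Log canonicity enters only to transport the log del Pezzo hypotheses to a general section $\sigma(Z)\subset Y$, so that $(Z,\sM,\Delta_Z)$ again satisfies (3--5).
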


\begin{proof}
By \cite[Theorem 0.1]{campana_koziarz_paun}, we must have 
$-(K_X+\Delta)\sim_\bQ (n-1)c_1(\sL)$.

If $n=1$, then $-K_X\sim_\bQ\Delta$ is ample, and hence
$(X,\sL,\sO_X(\Delta))$ satisfies one of conditions (1-3) in the statement of Theorem \ref{thm:classification_del_pezzo}.

Suppose from now on that $n\ge 2$.
By Lemma \ref{lemma:delta_genus}, $\Delta(X,\sL)=0$, and so
we can apply Theorem \ref{thm:classification_delta_genus_zero}. Notice that
if $(X,\sL)$ satisfies any of conditions (1-6) of Theorem \ref{thm:classification_delta_genus_zero}, then
$-(K_X+\Delta)\sim_\mathbb{Z} (n-1)c_1(\sL)$ since $X\setminus\textup{Sing}(X)$ is simply connected.

If $(X,\sL)$ satisfies any of conditions (1-4) of Theorem \ref{thm:classification_delta_genus_zero}, 
one checks easily that $(X,\sL,\Delta)$ satisfies one of conditions (1-4) in the statement of Theorem \ref{thm:classification_del_pezzo}.

Suppose that $(X,\sL)\cong (\p_{\p^1}(\sE),\sO_{\p_{\p^1}(\sE)}(1))$
for an ample vector bundle $\sE$ on $\p^1$, and write $\pi : X \to \p^1$ for the natural projection.
Then 
$$
\Delta\in \big|\sO_X(-K_X)\otimes\sL^{\otimes 1-n}\big|=\big|\sL\otimes \pi^*\big(\det(\sE^*)\otimes\sO_{\p^1}(2)\big)\big|.
$$
Write $\sE\cong\sO_{\p^1}(a_1)
\oplus\cdots\oplus\sO_{\p^1}(a_n)$, with $1\le a_1\le \cdots \le a_n$.
By the projection formula, $h^0\big(X,\sL\otimes \pi^*\big(\det(\sE^*)\otimes\sO_{\p^1}(2)\big)\big)
=h^0(\p^1,\sE\otimes\det(\sE^*)\otimes\sO_{\p^1}(2))$, hence we must have 
$a_1+\cdots+a_{n-1} \le 2$.
This implies that 
$(n,a_1,\ldots,a_{n-1})\in\{(2,1),(2,2),(3,1,1)\}$. 
Thus either $\sE$ satisfies condition  
(5a-c) in the statement of Theorem \ref{thm:classification_del_pezzo}, or 
$\sE=\sO_{\p^1}(1)\oplus\sO_{\p^1}(1)$, $\Delta\in|\sO_{\p_{\p^1}(\sE)}(1)|$, and hence $X$ 
satisfies condition  (2) with $n=2$.

Finally, suppose that $\sL$ is very ample, and embeds $X$ as a cone with vertex $V$ over a smooth  polarized variety $(Z,\sM)$
satisfying one of  conditions (3-5) in the statement of Theorem \ref{thm:classification_delta_genus_zero}.
Set $m:=\dim(Z)$ and $s:=n-m=\dim(V)+1$. 
Let $e : Y \to X$ be the blow-up of $X$ along $V$, with exceptional divisor $E$. 
We have
$Y\cong \p_Z(\sM\oplus \sO_Z^{\oplus s})$, with natural projection $\pi : Y\to Z$, 
and  tautological line bundle $\sO_Y(1) \cong e^*\sL$.
The exceptional divisor $E$ corresponds to the projection 
$\sM\oplus \sO_Z^{\oplus s}\twoheadrightarrow \sO_Z^{\oplus s}$.

Let $\Delta_Y$ be the strict transform of $\Delta$ in $Y$. 
We are done if we prove that $\Delta_Y=\pi^*\Delta_Z$ for some divisor $\Delta_Z$ on $Z$.

Write $\Delta_Y\sim_\mathbb{Z}\pi^*\Delta_Z+k E$ for some
integral divisor $\Delta_Z$ on $Z$, and some integer $k\ge 0$.
Let $\sigma : Z \to Y$ be the section of $\pi$
corresponding to a general surjection 
$\sM\oplus \sO_Z^{\oplus s}\twoheadrightarrow \sM$. 
Then $\sigma(Z)\cap E=\emptyset$, and $\sN_{\sigma(Z)/Y}\cong \sM^{\oplus s}$. 
Moreover, 
$(\sigma(Z),{\Delta_Y}_{|\sigma(Z)})$ is log canonical (see for instance \cite[Proposition 7.3.2]{kollar97}), 
and, by the adjunction formula, 
$-(K_{\sigma(Z)}+{\Delta_Y}_{|\sigma(Z)})\sim_\mathbb{Z} (m-1){c_1(\sO_Y(1))}_{|\sigma(Z)}$. 

We have $h^0(Y,\sO_Y(k E+\pi^*\Delta_Z)=h^0(Y,\sO_Y(\Delta_Y))\ge 1$. On the other hand, 
\begin{multline*}
$$h^0(Y,\sO_Y(k E+\pi^*\Delta_Z))=
h^0(Y,\sO_Y(k)\otimes\pi^*\sM^{\otimes -k}\otimes\sO_Y(\pi^*\Delta_Z))\\
=h^0(Z,S^{k}(\sM\oplus \sO_Z^{\oplus s})\otimes\sM^{\otimes - k}\otimes\sO_Z(\Delta_Z))=h^0(Z,S^{k}(\sO_Z\oplus {\sM_Z^{\otimes -1}}^{\oplus s})\otimes\sO_Z(\Delta_Z)).$$
\end{multline*}
We claim that $h^0(Z,\sM_Z^{\otimes -1}\otimes\sO_Z(\Delta_Z))= 0$.
Indeed, suppose that  $h^0(Z,\sM_Z^{\otimes -1}\otimes\sO_Z(\Delta_Z))\neq  0$. Then 
$-K_Z \sim_\mathbb{Z} \Delta_Z+(m-1)c_1(\sM)$ 
since
${\Delta_Y}_{|\sigma(Z)}\sim_\mathbb{Z} \big(\pi_{|\sigma(Z)}\big)^*\Delta_Z$, and hence
$-K_Z \ge m c_1(\sM)$.
Under these conditions,  \cite[Theorem 2.5]{codim_1_del_pezzo_fols} implies that 
$(Z,\sM,\sO_Z(\Delta_Z))$ is isomorphic to either  $(\p^m,\sO_{\p^m}(1),\sO_{\p^m}(2))$ or 
$(Q^m,\sO_{Q^m}(1),\sO_{Q^m}(1))$. 
This contradicts our current assumption that $(Z,\sM)$
satisfies one of  conditions (3-5) in the statement of Theorem \ref{thm:classification_delta_genus_zero},
and proves the claim.

Since $h^0(Z,\sM_Z^{\otimes -1}\otimes\sO_Z(\Delta_Z))= 0$, we must have
$h^0(Y,\sO_Y(k E+\pi^*\Delta_Z))=h^0(Z,\sO_Z(\Delta_Z))$. 
Thus, replacing $\Delta_Z$ with a suitable member of its linear system if necessary,
we may assume that
$\Delta_Y=\pi^*\Delta_Z+k E$, and hence $k=0$. 
Therefore $(X,\sL,\Delta)$ satisfies condition (6) in the statement of Theorem  
\ref{thm:classification_del_pezzo}.
\end{proof}

In dimension $2$, we have the following classification, without the assumption that $(X,\Delta)$ is log canonical.

\begin{thm}[{\cite[Theorem 4.8]{nakayama07}}]\label{nakayama}
Let $(X,\Delta)$ be a pair with $\dim(X)=2$ and $\Delta\neq 0$. 
Suppose that  $-(K_X+\Delta)$ is Cartier and ample.
Then one of the following holds.

\begin{enumerate}
\item $X\cong \p^2$ and $\deg(\Delta)\in\{1,2\}$.
\item $X\cong \mathbb{F}_d$ for some $d\ge 0$ and $\Delta$ is a minimal section.
\item $X\cong \mathbb{F}_d$ for some $d\ge 0$ and $\Delta\sim_\mathbb{Z} \sigma+ f$,
where $\sigma$ is a minimal section and $f$ a fiber of $\mathbb{F}_d \to \p^1$.
\item $X\cong \p(1,1,d)$ for some $d\ge 2$ and 
$\Delta\sim_\mathbb{Z} 2 \ell$ where $\ell$ is a ruling of the cone $\p(1,1,d)$.
\end{enumerate}
\end{thm}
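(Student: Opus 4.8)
This is a classification statement, so the plan is to first pin down the surface $X$ and then, on each $X$ that survives, determine the possible boundaries $\Delta$. Set $L:=-(K_X+\Delta)$, an ample Cartier divisor; since $\dim X=2$ the divisibility condition $-(K_X+\Delta)\equiv (n-1)c_1(\sL)$ is automatic with $\sL=\sO_X(L)$, so $(X,\Delta)$ is exactly a log del Pezzo pair \emph{without} the log canonical hypothesis.

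The first reduction is that $X$ is a rational surface. As a divisor class $-K_X=L+\Delta$ is big, being the sum of an ample class and a nonzero effective one; transporting this to a resolution $g:\tilde X\to X$ one finds $-K_{\tilde X}$ big, and a smooth projective surface with $-K_{\tilde X}$ big is rational, hence so is $X$. Along the way I would record the shape of the boundary: for each prime component $\Delta_i$ of $\Delta$, adjunction together with $L\cdot\Delta_i>0$ forces $\Delta_i$ to be a rational curve with $(K_X+\Delta)\cdot\Delta_i<0$, and a Riemann--Roch computation in the spirit of Lemma~\ref{lemma:delta_genus} (run directly here, since $n=2$) gives $L\cdot\Delta=2$. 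Thus $\Delta$ consists of at most two rational components of very small $L$-degree.

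Now the main dichotomy: either $X$ admits a fibration $\pi:X\to\p^1$ with general fibre $f\cong\p^1$, or $\rho(X)=1$. In the fibration case, write $\Delta=\Delta^{h}+\Delta^{v}$ for its $\pi$-horizontal and $\pi$-vertical parts. From $1\le L\cdot f=-K_X\cdot f-\Delta^{h}\cdot f=2-\Delta^{h}\cdot f$ one reads $\Delta^{h}\cdot f\in\{0,1\}$; a short analysis of reducible fibres rules out $\Delta^h=0$ and shows that for $\Delta^{h}\cdot f=1$ the divisor $\Delta^{h}$ is an irreducible section and $L\cdot f=1$. Then $L\cdot\Delta=2$ allows at most one extra vertical fibre-component, and examining the contractions of $\pi$ identifies $X$ with a Hirzebruch surface $\mathbb{F}_d$, with $\Delta$ a minimal section or a minimal section plus a fibre, or with a relatively minimal contraction of one, which over $\p^1$ yields $\p^2$ and the cones $\p(1,1,d)$. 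In the case $\rho(X)=1$, $X$ is a del Pezzo surface of Picard number one carrying an anticanonical-type boundary with $L\cdot\Delta=2$; bounding $K_X^2$ from below using a component of $\Delta$, and crucially using that $-(K_X+\Delta)$ is \emph{Cartier} — which is exactly what excludes fractional boundaries such as $\ell$ or $3\ell$ on a cone and eliminates the more singular del Pezzo surfaces — leaves $X\cong\p^2$, with $\Delta$ a line or a conic so $\deg\Delta\in\{1,2\}$, or $X\cong\p(1,1,d)$ with $d\ge 2$, where the Cartier condition forces $\Delta\sim_{\mathbb Z}2\ell$. Merging the two cases produces the list (1)--(4), and one checks conversely that each entry does give an ample Cartier $-(K_X+\Delta)$.

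The genuinely hard step is the control of the singularities of $X$ in the absence of a log canonical assumption: without it one cannot freely use Kawamata--Viehweg vanishing for the pair, nor assume that $X$ has rational singularities, so even the two ``easy'' inputs above — that $X$ is rational and that $L\cdot\Delta=2$ — require careful bookkeeping on the minimal resolution of $X$, tracking how the exceptional curves meet $g^{-1}_*\Delta$ and the pullback of $L$; likewise, eliminating stray singular del Pezzo surfaces when $\rho(X)=1$ is where most of the work lies. Everything after that is a finite, if lengthy, numerical case analysis on Hirzebruch surfaces and their contractions.
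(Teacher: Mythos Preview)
The paper does not prove this theorem: it is quoted verbatim from \cite[Theorem 4.8]{nakayama07} and used as a black box (only in the remark leading to Proposition~\ref{classification_r=2}). There is therefore no proof in the paper to compare your proposal against.

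That said, your outline is a reasonable strategy for a direct argument, and it parallels the paper's treatment of the log canonical case (Lemma~\ref{lemma:delta_genus} plus Fujita's $\Delta$-genus zero list) while correctly flagging the one genuine obstacle: without the log canonical hypothesis you lose the Kodaira-type vanishing used in Lemma~\ref{lemma:delta_genus}, so neither $\chi(\sO_X)=1$ nor $L\cdot\Delta=2$ is free, and the bigness of $-K_{\tilde X}$ on a resolution is not automatic (discrepancies can be positive). Nakayama's proof handles exactly this by working on the minimal resolution and exploiting that $-(K_X+\Delta)$ is \emph{Cartier}, not merely $\bQ$-Cartier, to control the exceptional configuration; your final paragraph identifies the right pressure point. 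One small correction: the dichotomy ``$X$ admits a $\p^1$-fibration or $\rho(X)=1$'' is not quite the right split on a possibly singular rational surface --- you should run MMP on the resolution and then push down, or argue directly with the Cartier index on $X$, rather than assume a priori that $\rho(X)>1$ forces a conic bundle structure on $X$ itself.
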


%
%

\section{Foliations}\label{sections:foliations}

\subsection{Foliations and Pfaff fields}

\begin{defn}
Let $X$ be normal variety.
A \emph{foliation} on $X$ is a nonzero coherent subsheaf $\sF\subsetneq T_X$ such that
\begin{itemize}
	\item $\sF$ is closed under the Lie bracket, and
	\item $\sF$ is saturated in $T_X$ (i.e., $T_X / \sF$ is torsion free).
\end{itemize}

The \textit{rank} $r$ of $\sF$ is the generic rank of $\sF$. The \emph{codimension} of $\sF$ is $q=\dim(X)-r$.

The \textit{canonical class} $K_{\sF}$ of $\sF$ is any Weil divisor on $X$ such that 
$\sO_X(-K_{\sF})\cong \det(\sF)$. 
\end{defn}

\begin{defn}\label{def:gorenstein}
A foliation $\sF$ on a normal variety is said to be \emph{$\bQ$-Gorenstein} if 
its canonical class $K_{\sF}$ is $\bQ$-Cartier.
\end{defn}

\begin{defn}\label{def:pfaff}
Let $X$ be a variety, and $r$ a  positive integer.
A  \emph{Pfaff field of rank r} on $X$ is a nonzero map
$\eta : \Omega^r_X\to \sL$, where  $\sL$ is a reflexive sheaf of rank 1
on $X$ such that $\sL^{[m]}$ is invertible for some integer $m\ge 1$. 

The \textit{singular locus} $S$ of $\eta$
is the closed subscheme of $X$ whose ideal sheaf $\sI_S$ is the image of
the induced map $\Omega^r_X[\otimes] \sL^*\to \sO_X$.  
\end{defn}

Notice that a $\bQ$-Gorenstein foliation $\sF$ of rank $r$  on normal variety $X$ naturaly gives rise to a Pfaff field of rank $r$ on $X$:
$$
\eta:\Omega_X^r=\wedge^r(\Omega_X^1) \to \wedge^r(T_X^*) \to \wedge^r(\sF^*) \to \det(\sF^*)\cong\det(\sF)^{*}\cong\sO_X(K_\sF).
$$

\begin{defn}\label{defn:sing_locus} \label{def:regular}
Let  $\sF$ be a $\bQ$-Gorenstein foliation on a normal variety $X$.
The \textit{singular locus} of  $\sF$ is defined to be
the singular locus $S$ of the associated Pfaff field. 
We say that $\sF$ is \textit{regular at a point} $x\in X$ if $x\not\in S$. 
We say that $\sF$ is \textit{regular} if $S=\emptyset$.
\end{defn}

Our definition of Pfaff field is more general than the one usually found in the literature, where $\sL$ is required to be
invertibile. This generalization is needed in order to treat  $\bQ$-Gorenstein foliations whose canonical classes are not Cartier.

\begin{say}[Foliations defined by $q$-forms] \label{q-forms}
Let $\sF$ be a codimension $q$ foliation on an $n$-dimenional normal variety $X$.
The \emph{normal sheaf} of $\sF$ is $N_\sF:=(T_X/\sF)^{**}$.
The $q$-th wedge product of the inclusion
$N^*_\sF\into (\Omega^1_X)^{**}$ gives rise to a nonzero global section 
 $\omega\in H^0\big(X,\Omega^{q}_X[\otimes] \det(N_\sF)\big)$
 whose zero locus has codimension at least $2$ in $X$. 
Moreover, $\omega$ is \emph{locally decomposable} and \emph{integrable}.
To say that $\omega$ is locally decomposable means that, 
in a neighborhood of a general point of $X$, $\omega$ decomposes as the wedge product of $q$ local $1$-forms 
$\omega=\omega_1\wedge\cdots\wedge\omega_q$.
To say that it is integrable means that for this local decomposition one has 
$d\omega_i\wedge \omega=0$ for every  $i\in\{1,\ldots,q\}$. 
The integrability condition for $\omega$ is equivalent to the condition that $\sF$ 
is closed under the Lie bracket.

Conversely, let $\sL$ be a reflexive sheaf of rank $1$ on $X$, and 
$\omega\in H^0(X,\Omega^{q}_X[\otimes] \sL)$ a global section
whose zero locus has codimension at least $2$ in $X$.
Suppose that $\omega$  is locally decomposable and integrable.
Then  the kernel
of the morphism $T_X \to \Omega^{q-1}_X[\otimes] \sL$ given by the contraction with $\omega$
defines 
a foliation of codimension $q$ on $X$. 
These constructions are inverse of each other. 
 \end{say}


\subsection{Algebraically integrable foliations}\label{section:algebraic_foliations} \

\medskip

Let $X$ be a normal projective variety, and $\sF$ a foliation on $X$.
In this subsection we assume that $\sF$ is \emph{algebraically integrable}. 
This means that  $\sF$ is the relative tangent sheaf to a dominant rational map $\varphi:X\map Y$
with connected fibers. 
In this case, by a general leaf of $\sF$ we mean the fiber of $\varphi$ over a general point of $Y$.
We start by defining the notion of  \emph{log leaf} 
when $\sF$ is moreover  $\bQ$-Gorenstein.
It plays a key role in our approach to $\bQ$-Fano foliations.

\begin{defn}[{See \cite[Definition 3.10]{codim_1_del_pezzo_fols} for details}]\label{defn:log_leaf}
Let $X$ be a normal projective variety, $\sF$ a $\bQ$-Gorenstein algebraically integrable foliation of rank $r$ on $X$, and 
$\eta:\Omega_X^{r} \to \sO_X(K_\sF)$ its corresponding Pfaff field.
Let $F\subset X$ be the closure of a general leaf of $\sF$, and $\tilde e:\tilde F\to X$
the normalization of $F$. Let $m \ge 1$ be the Cartier index of $K_\sF$, \textit{i.e.}, the smallest positive integer $m$ such that $mK_\sF$ is Cartier. 
Then $\eta$ induces a generically surjective map
$\otimes^m\Omega^r_{\tilde F}\to \tilde e^*\sO_X(mK_\sF)$.
Hence there is a canonically defined effective Weil $\bQ$-divisor $\tilde \Delta$ on $\tilde F$ such that 
$mK_{\tilde F}+m\tilde \Delta\sim_\bZ \tilde e^*mK_\sF$. 

We call the pair $(\tilde F,\tilde \Delta)$ a \emph{general log leaf} of $\sF$.
\end{defn}

The next lemma gives sufficient conditions under which the support of $\tilde \Delta$ is precisely the 
image in $\tilde F$ of the singular locus of $\sF$. It is an immediate consequence of \cite[Lemma 5.6]{fano_fols}.

\begin{lemma}\label{lemma:singular_locus}
Let $\sF$ be  an algebraically integrable foliation on a complex projective manifold $X$. 
Suppose that $\sF$ is locally free along the closure of a general leaf $F$.
Let $\tilde{e} : \tilde{F} \to X$ be its normalization, and $(\tilde{F},\tilde{\Delta})$ the corresponding log leaf. 
Then $\Supp(\tilde{\Delta})=\tilde{e}^{-1}(F\cap \textup{Sing}(\sF))$.
\end{lemma}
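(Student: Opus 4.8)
The plan is to deduce the statement from \cite[Lemma 5.6]{fano_fols}, which records the precise pointwise comparison between the log-leaf divisor $\tilde\Delta$ and the singular locus of $\sF$; the work here is essentially to observe that the standing hypotheses place us in the situation of that lemma and to make the reduction explicit. First I would unwind Definition~\ref{defn:log_leaf} in the present setting. Since $X$ is a manifold, $K_\sF$ is Cartier, so the Cartier index is $m=1$, and the Pfaff field $\eta\colon\Omega^r_X\to\sO_X(K_\sF)$ of Definition~\ref{defn:sing_locus} induces a single generically surjective morphism $\psi\colon\Omega^r_{\tilde F}\to\tilde e^*\sO_X(K_\sF)$ with $\sO_{\tilde F}(K_{\tilde F})\otimes\sO_{\tilde F}(\tilde\Delta)\cong\tilde e^*\sO_X(K_\sF)$, and $\tilde\Delta$ is by construction the divisorial part of the locus where $\psi$ fails to be surjective. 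Hence $\Supp(\tilde\Delta)$ is the divisorial part of the degeneracy locus of $\psi$, and the assertion to be proved is that this locus is exactly $\tilde e^{-1}\big(F\cap\textup{Sing}(\sF)\big)$.

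I would then establish the two inclusions. The inclusion $\Supp(\tilde\Delta)\subseteq\tilde e^{-1}\big(F\cap\textup{Sing}(\sF)\big)$ already follows from the construction of the log leaf together with the standard properties of a general leaf of an algebraically integrable foliation: over a point $x\in F$ outside $\textup{Sing}(\sF)$ the foliation $\sF$ is a subbundle of $T_X$, the leaf through $x$ is smooth at $x$ with tangent space $\sF_x$, so $\tilde e$ is an isomorphism over a neighbourhood of $x$, the natural map $\tilde e^*\sF\to T_{\tilde F}$ is an isomorphism there, and $\psi$ is surjective at every point of $\tilde F$ over $x$. For the reverse inclusion I would appeal to \cite[Lemma 5.6]{fano_fols}: the hypothesis that $\sF$ is locally free along $F$ gives, in a neighbourhood of $F$, an exact sequence $0\to\sF\to T_X\to\sQ\to 0$ with $\sQ$ torsion free, so $F\cap\textup{Sing}(\sF)$ is precisely the locus along $F$ where $\sF$ fails to be a subbundle of $T_X$; from this \cite[Lemma 5.6]{fano_fols} extracts both that $\tilde e^{-1}\big(F\cap\textup{Sing}(\sF)\big)$ is purely of codimension one in $\tilde F$ and that $\psi$ genuinely degenerates over a general point of each of its components. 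Combining the two inclusions yields $\Supp(\tilde\Delta)=\tilde e^{-1}\big(F\cap\textup{Sing}(\sF)\big)$.

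The substantive content lies in \cite[Lemma 5.6]{fano_fols}, and within this reduction the only point requiring real care is the reverse inclusion: one must rule out that at some point of $\tilde F$ over $F\cap\textup{Sing}(\sF)$ the morphism $\psi$ is nevertheless surjective, and one must know that $\tilde e^{-1}\big(F\cap\textup{Sing}(\sF)\big)$ cannot have a component of codimension $\ge 2$, for otherwise it could not coincide with the support of a divisor. The local freeness of $\sF$ along $F$ is exactly the hypothesis that makes both of these work, by forcing the rank drop of $\sF\hookrightarrow T_X$ along $F$ to be recorded faithfully, in codimension one, by $\tilde\Delta$; without it the statement genuinely fails. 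This last point is therefore the main obstacle, and it is precisely what \cite[Lemma 5.6]{fano_fols} supplies.
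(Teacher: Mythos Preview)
Your proposal is correct and follows exactly the paper's approach: the paper states that the lemma is an immediate consequence of \cite[Lemma 5.6]{fano_fols}, and you have simply spelled out this reduction in detail. Your additional explanation of the two inclusions and of why local freeness along $F$ is the crucial hypothesis is accurate and adds nothing beyond what that citation already covers.
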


\begin{defn}\label{def:sing_fol}
Let $X$ be normal projective variety, $\sF$ a $\bQ$-Gorenstein algebraically integrable foliation  on $X$,
and $(\tilde F,\tilde \Delta)$ its general log leaf. 
We say that $\sF$ has \emph{log canonical singularities along a general leaf} if
$(\tilde F,\tilde \Delta)$ is log canonical.
\end{defn}

\begin{rem}
In \cite[Definition I.1.2]{mcquillan08}, McQuillan introduced a notion of log canonicity 
for foliations, without requiring algebraic integrability. 
If a $\bQ$-Gorenstein algebraically integrable foliation $\sF$ 
is log canonical in the sense of McQuillan, then $\sF$
has log canonical singularities along a general leaf
(see \cite[Proposition 3.11]{fano_fols} and its proof). 
\end{rem}

\begin{say}[The family of log leaves] \label{family_leaves} 
Let $X$ be normal projective variety, and $\sF$ an algebraically integrable foliation on $X$.
We describe the \emph{family of leaves} of $\sF$
(see \cite[Lemma 3.2 and Remark 3.8]{fano_fols} for details).
There is a unique irreducible closed subvariety $W$ of $\Chow(X)$ 
whose general point parametrizes the closure of a general leaf of $\sF$
(viewed as a reduced and irreducible cycle in $X$).
It comes with a universal cycle $U \subset W\times X$ and morphisms:

\centerline{
\xymatrix{
U \ar[r]^{e}\ar[d]_{\pi} & X \ , \\
 W &
}
}
\noindent where $e:U\to X$ is birational and, for a general point $w\in W$, 
$e\big(\pi^{-1}(w)\big) \subset X$ is the closure of a leaf of $\sF$.

The variety $W$ is called the \emph{family of leaves} of $\sF$.

Suppose moreover that $\sF$ is $\bQ$-Gorenstein, denote by $m \ge 1$ the Cartier index of $K_\sF$, 
by $r$ the rank of $\sF$, and 
by $\eta:\Omega_X^{r} \to \sO_X(K_\sF)$ the corresponding Pfaff field.
 Given a morphism $V\to W$ from a normal variety,
let $U_V$ be the normalization of $U\times_VW$, with induced morphisms: 

\centerline{
\xymatrix{
U_V \ar[r]^{e_V}\ar[d]_{\pi_V} & X \ . \\
 V &
}
}
\noindent Then $\eta$ induces a generically surjective map
$
\otimes^m\Omega_{ U_V/ V}^{r}\to {e_V}^*\sO_X(mK_\sF).
$
Thus there is a canonically defined effective Weil $\bQ$-divisor $\Delta_V$ on $ U_V$ such that 
$\det(\Omega_{{U_V}/ {V}}^1)^{\otimes m}[\otimes] \sO_{{U_V}}(m\Delta_V) \cong  
{e_V}^*\sO_X(mK_\sF)$. 
Suppose that $v\in V$ is mapped to a general point of $W$, 
set $U_v := (\pi_V)^{-1}(v)$, and $\Delta_v:=(\Delta_V)_{|U_v}$.
Then $(U_v, \Delta_v)$ coincides with the general log leaf $(\tilde F,\tilde \Delta)$ defined above.
\end{say}

\subsection{$\bQ$-Fano foliations}

\begin{defn}\label{def:fano_foliation}\label{def:index_fano_foliation}\label{def:fano_pfaff}\label{def:index_fano_pfaff}
Let $X$ be a normal projective variety, and 
$\sF$ a $\bQ$-Gorenstein foliation on $X$.
We say that $\sF$ is a \emph{$\bQ$-Fano foliation} if $-K_{\sF}$ is ample.
In this case, the \emph{index} of $\sF$ is the largest positive rational number $\iota_{\sF}$ such that
$-K_{\sF} \sim_\bQ \iota_{\sF} H$ for a Cartier divisor $H$ on $X$.
\end{defn}

If $\sF$ is a $\bQ$-Fano  foliation  of rank $r$ on a normal projective variety $X$, then, 
by \cite[Corollary 1.2]{hoeringKOfoliations}, $\iota_{\sF} \le r$.
Moreover, equality holds  if and only if $X$ is a generalized normal cone over a normal projective variety $Z$, 
and $\sF$ is induced by the natural rational map $X \dashrightarrow Z$ 
(see also \cite[Theorem 1.1]{adk08}, and \cite[Theorem 4.11]{codim_1_del_pezzo_fols}).

\begin{defn}
A $\bQ$-Fano foliation $\sF$  of rank $r\ge 2$
is called a \textit{del Pezzo foliation} if  $\iota_{\sF} = r-1$.
\end{defn}

In \cite[Proposition 5.3]{fano_fols}, we proved that
algebraically integrable Fano foliations having log canonical singularities along a general leaf
have a very special property: there is a common point contained in the closure of a general leaf. We
strengthen this result in Proposition~\ref{lemma:common_lc_center} below. 
It will be a consequence of the following theorem.

\begin{thm}[{\cite[Theorem 3.1]{adk08}}]\label{thm:-KX/Y_not_ample}
  Let $X$ be a normal projective variety, $f:X\to C$ a surjective morphism onto a smooth
  curve, and $\Delta$ an effective Weil
  $\bQ$-divisor on $X$ such that $(X,\Delta)$ is log canonical over the generic
  point of $C$. Then $-(K_{X/C}+\Delta)$ is not ample.
\end{thm}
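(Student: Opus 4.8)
The plan is to argue by contradiction: assume $-(K_{X/C}+\Delta)$ is ample and derive an absurdity. If $\dim X=1$ the statement is immediate (then $f$ is finite, $X\cong C$, $K_{X/C}=0$, and $-\Delta$ is anti-effective, hence not ample), so assume $\dim X\ge 2$; if convenient one first passes to a crepant birational model, which preserves the hypothesis that the pair is log canonical over the generic point of $C$, to reduce to $X$ smooth with $\Delta$ simple normal crossing over the generic point and smooth general fibre $F$. Now set $A:=-(K_{X/C}+\Delta)$, an ample $\bQ$-divisor. For $m$ sufficiently divisible choose a general member $B\in|mA|$ and put $\Gamma:=\frac1m B$ and $\Theta:=\Delta+\Gamma$. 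Then $K_{X/C}+\Theta\sim_\bQ 0$, equivalently $K_X+\Theta\sim_\bQ f^*K_C$, the pair $(X,\Theta)$ is still log canonical over the generic point of $C$ (a general member of a very ample linear system creates no extra singularities over a general fibre), and $\Theta\ge\Gamma$ with $\Gamma$ ample on $X$. In particular the general fibre $(F,\Theta_{|F})$ is a log canonical log Calabi--Yau pair with ample boundary; in particular $F$ is log Fano, hence rationally connected.

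Next I would apply the canonical bundle formula to the lc-trivial fibration $(X,\Theta)\to C$: after a finite base change, harmless for our purposes, there are $\bQ$-divisors $B_C\ge 0$ (the discriminant) and $M_C$ (the moduli part, which is nef) with
$$
K_X+\Theta\;\sim_\bQ\; f^*\big(K_C+B_C+M_C\big).
$$
Here $B_C$ is effective because $(X,\Theta)$ is log canonical in a neighbourhood of the generic point of every fibre; to make this literally true one may first have to replace $(X,\Theta)$ by a log canonical model so that ``lc over the generic point of $C$'' is upgraded to genuine log canonicity. Comparing with $K_X+\Theta\sim_\bQ f^*K_C$ and using that $f$ is surjective onto a curve yields $B_C+M_C\sim_\bQ 0$ on $C$; since $B_C\ge 0$ and $M_C$ is nef, both have degree $0$, which forces $B_C=0$ and $M_C\equiv 0$.

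Vanishing of the moduli part means the family $(X,\Theta)\to C$ has zero variation, hence is isotrivial: after a further finite surjective base change $C'\to C$ the pulled-back pair becomes a product $\big(F_0\times C',\,\mathrm{pr}_{F_0}^{\,*}\Theta_0\big)$. Consequently the pulled-back boundary is numerically a pullback from $F_0$, so it meets each ``horizontal'' curve $\{x\}\times C'$ in degree $0$; but it dominates $\Gamma'$, the pullback of the ample divisor $\Gamma$ along the finite map $X'\to X$, which is ample on $X'$ and hence positive on every curve. This contradiction proves the theorem.

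I expect the main obstacles to lie entirely in the last two steps, and all to stem from the singularities being merely log canonical (not klt) and from log canonicity holding a priori only over the generic point of $C$: one must produce a log canonical model so that the discriminant of the canonical bundle formula is genuinely effective, invoke the canonical bundle formula and the nefness of the moduli part in the lc-trivial (not just klt-trivial) setting, and deduce isotriviality from the vanishing of the moduli part at this level of generality. The bookkeeping of $\bQ$-Cartier indices and of the effect of the chosen resolution and base changes on the boundary is routine but should be carried out carefully.
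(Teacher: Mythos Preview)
The paper does not prove this theorem; it is quoted verbatim as \cite[Theorem 3.1]{adk08} and used as a black box in the proof of Proposition~\ref{lemma:common_lc_center}. There is therefore no proof in the paper to compare your proposal against.

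For what it is worth, your outline is in the right spirit but differs from the argument in the cited reference. The proof in \cite{adk08} does not go through the canonical bundle formula or an isotriviality statement; it is a direct semipositivity argument. One passes to a log resolution and a semistable model after base change, then uses the weak positivity of pushforwards of pluricanonical sheaves (Viehweg/Koll\'ar type results) to show that for $m\gg 0$ the sheaf $f_*\sO_X\big(m(K_{X/C}+\Delta)\big)$, or rather its analogue on the model, is weakly positive. Ampleness of $-(K_{X/C}+\Delta)$ would force this pushforward to have negative degree, a contradiction. Your approach instead adds a general ample divisor to make the relative log canonical class trivial, applies the canonical bundle formula, and tries to conclude isotriviality from $M_C\equiv 0$. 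The last implication is exactly where your argument is most fragile: for lc-trivial (as opposed to klt-trivial) fibrations, numerical triviality of the moduli part does not by itself give a product structure on the pair after base change, and even in the klt case one needs Ambro-type results on the semiampleness of $M_C$ together with a separate argument to trivialize the boundary, not just the fibres. The semipositivity route avoids this entirely and needs only that the pair is lc over the generic point of $C$.
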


\begin{prop}\label{lemma:common_lc_center}
Let $\sF$ be an algebraically integrable $\bQ$-Fano foliation on a normal projective variety $X$,
having log canonical singularities along a general leaf.
Then there is a closed  irreducible subset $T \subset X$ satisfying the following property.
For a general log leaf $(\tilde{F},\tilde{\Delta})$ of $\sF$, there exists a log canonical center 
$S$ of $(\tilde{F},\tilde{\Delta})$ whose image in $X$ is $T$.
\end{prop}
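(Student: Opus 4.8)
The plan is to reduce the statement to Theorem~\ref{thm:-KX/Y_not_ample} by working with the family of log leaves from \ref{family_leaves}. First I would set up the family of leaves $\pi: U \to W$, $e: U \to X$, pass to a resolution, and replace $W$ by a smooth curve $C$ via a general complete intersection: concretely, choose a general curve $C \subset W$ (a smooth curve obtained by intersecting $W$ with general hyperplanes, meeting the locus of general points of $W$), and form $U_C$, the normalization of $U \times_W C$, with maps $e_C: U_C \to X$ and $\pi_C: U_C \to C$. By the discussion in \ref{family_leaves} there is an effective Weil $\bQ$-divisor $\Delta_C$ on $U_C$ with $\det(\Omega^1_{U_C/C})^{\otimes m}[\otimes]\sO_{U_C}(m\Delta_C) \cong e_C^* \sO_X(mK_\sF)$, and for $c \in C$ general the pair $(U_c, \Delta_c)$ is the general log leaf $(\tilde F, \tilde\Delta)$, which is log canonical by hypothesis.

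The key computation is the comparison $K_{U_C/C} + \Delta_C \sim_\bQ e_C^* K_\sF$ (up to a $\bQ$-Cartier correction supported over the non-general locus of $C$, which one can absorb or ignore since we only care about the generic point of $C$). Since $\sF$ is $\bQ$-Fano, $-K_\sF$ is ample, so $-e_C^*K_\sF$ is nef and big on $U_C$; more precisely, pulling back an ample $H$ with $-K_\sF \sim_\bQ \iota_\sF H$, we get that $-(K_{U_C/C} + \Delta_C)$ is $e_C$-ample — but $e_C$ is generically finite (indeed birational onto its image pattern from $e$), so this is not quite ampleness on $U_C$. To genuinely apply Theorem~\ref{thm:-KX/Y_not_ample} I would instead argue by contradiction: suppose no such common subvariety $T$ exists. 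Then for the general leaf, the log canonical centers of $(U_c,\Delta_c)$ move as $c$ varies, and in particular — here is where I must be careful — one shows that after the base change the divisor $-(K_{U_C/C}+\Delta_C)$ can be made ample by a suitable twist, OR one shows that $(U_C, \Delta_C)$ is log canonical over the generic point of $C$ and then that $-(K_{U_C/C}+\Delta_C)$ is ample, contradicting the theorem. The cleanest route: run the argument of \cite[Proposition 5.3]{fano_fols} verbatim to get \emph{some} common point in the closure of the general leaf, then upgrade "point" to "log canonical center with common image $T$" by a Noetherian/constructibility argument — the set of log canonical centers of $(U_c,\Delta_c)$ together with their images in $X$ forms a constructible family over $C$, so some irreducible component of this family dominates, giving a $T$ that is the common image of a log canonical center of the general log leaf.

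So concretely the steps are: (1) form the family of log leaves over a general curve $C \subset W$; (2) identify $K_{U_C/C} + \Delta_C$ with $e_C^* K_\sF$ modulo fibral corrections, hence show $(U_C,\Delta_C)$ is log canonical over the generic point of $C$; (3) observe that if the conclusion fails then the log canonical locus of the general log leaf, pushed to $X$, is "spread out" — use this together with the inequalities on the relative canonical to arrange that $-(K_{U_C/C}+\Delta_C)$ is ample, contradicting Theorem~\ref{thm:-KX/Y_not_ample}; (4) conversely, when the conclusion holds, run a constructibility argument on the family $\{(c, S) : S \text{ is an lc center of } (U_c,\Delta_c)\} \to C$ to extract a single irreducible $T \subset X$ that arises as the image of an lc center for the general $c$. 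The main obstacle is Step~(3): precisely quantifying how the failure of a common log canonical center forces positivity of $-(K_{U_C/C}+\Delta_C)$ on the total space $U_C$ (not merely $e_C$-ampleness), so that Theorem~\ref{thm:-KX/Y_not_ample} applies. I expect one handles this by intersecting with a general complete-intersection curve in $U_C$ and estimating $(K_{U_C/C}+\Delta_C)\cdot$ that curve using that $e_C$ contracts no curve dominating $C$, together with adjunction relating $\Delta_C$ on fibers to $\tilde\Delta$; alternatively one cites directly the mechanism already developed in \cite[Proposition 5.3]{fano_fols} and only adds the constructibility step to track centers rather than points.
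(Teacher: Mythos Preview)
Your overall architecture---argue by contradiction, restrict the family of log leaves to a general complete intersection curve $C\subset W$, identify $K_{U_C/C}+\Delta_C\sim_\bQ e_C^*K_\sF$, verify $(U_C,\Delta_C)$ is log canonical over the generic point of $C$ (via inversion of adjunction), and then contradict Theorem~\ref{thm:-KX/Y_not_ample}---is exactly the paper's approach. You have also correctly isolated the genuine obstacle: $e_C^*(-K_\sF)$ is only nef and big, not ample, so Theorem~\ref{thm:-KX/Y_not_ample} does not apply directly.

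Where your proposal falls short is precisely at that obstacle. Your suggested fix (intersecting with a complete-intersection curve in $U_C$ and estimating degrees) does not work: $e_C$ may well contract curves in fibers, and a single curve estimate does not give ampleness. The paper's mechanism is different and uses the contradiction hypothesis in an essential way. If every log canonical center of the general fiber moves in $X$, then for each such center $S_w$ the corresponding horizontal subvariety $S_C\subset U_C$ satisfies $\dim e_C(S_C)=\dim S_C$; in particular $e_C$ does \emph{not} contract any log canonical center. This is what allows one to invoke a Kodaira-type decomposition (the paper cites \cite[Proposition 7.2(ii)]{demailly97}): write $e_C^*(-K_\sF)\sim_\bQ A+E$ with $A$ ample and $E$ effective, where $E$ can be chosen so that its support contains no log canonical center of the general fiber. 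Then for $0<\epsilon\ll 1$ the pair $(U_C,\Delta_C+\epsilon E)$ is still log canonical over the generic point of $C$, while
\[
-(K_{U_C/C}+\Delta_C+\epsilon E)\ \sim_\bQ\ e_C^*(-K_\sF)-\epsilon E\ \sim_\bQ\ (1-\epsilon)e_C^*(-K_\sF)+\epsilon A
\]
is ample. Now Theorem~\ref{thm:-KX/Y_not_ample} gives the contradiction. This perturbation trick is the one idea your Step~(3) is missing.

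Your fallback plan---cite \cite[Proposition 5.3]{fano_fols} and then upgrade by constructibility---does not work either. That proposition only produces a common \emph{point} in the closure of a general leaf, with no claim that this point is (the image of) a log canonical center. And constructibility alone cannot force the images of log canonical centers to be constant: it only tells you that the centers vary in finitely many irreducible families over $W$, each of which might genuinely map to a positive-dimensional locus in $\Chow(X)$. Ruling out that possibility is exactly the content of the positivity argument above; there is no shortcut around it.
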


\begin{proof}
Let $W$ be the normalization of the family of leaves of $\sF$, 
$U$ the normalization of the universal cycle over $W$,
with universal family morphisms $\pi : U \to W$ and $e : U \to X$.
As explained in \ref{family_leaves}, 
there is a canonically defined effective $\bQ$-Weil divisor $\Delta$ on $U$ such that 
$\det(\Omega_{U/ W}^1)^{\otimes m}[\otimes] \sO_{U}(m\Delta) \cong  
{e}^*\sO_X(mK_\sF)$, where $m \ge 1$ denotes the Cartier index of $K_\sF$. 
Moreover, there is a smooth dense open subset $W_0\subset W$ with the following properties. 
For any $w\in W_0$, denote by $U_w$ the fiber of $\pi$ over $w$, and set $\Delta_w:={\Delta}_{|U_w}$.
Then 
\begin{itemize}
\item $U_w$ is integral and normal, and
\item $(U_w, \Delta_w)$ has log canonical singularities.
\end{itemize}

\smallskip

To prove the proposition, suppose to the contrary that, for any two general points $w,w'\in W_0$, and any
log canonical centers $S_w$ and $S_{w'}$ of $(U_w,\Delta_w)$ and $(U_{w'},\Delta_{w'})$ respectively, we
have $e(S_w) \neq e(S_{w'})$.

\smallskip

Let $C \subset W$ be a (smooth) general complete intersection curve, and $U_C$
the normalization of $\pi^{-1}(C)$,
with induced morphisms $\pi_C: U_C\to C$ and $ e_C: U_C\to X$.
By \cite[Theorem 2.1']{bosch95},
after replacing $C$ with a finite cover if necessary, we 
may assume that $\pi_C$ has reduced fibers. 
As before, there is a 
canonically defined $\bQ$-Weil divisor
$\Delta_C$  
on $U_C$ such that
$K_{U_C/C}+\Delta_C \sim_\bQ  
{e_C}^*K_\sF$. Therefore  
$K_{U_C}+\Delta_C\sim_\bQ\pi_C^*K_{C}+{e}_C^*K_\sF$ is a $\bQ$-Cartier divisor.
For a general point $w \in C$, we identify $\Big(\pi_C^{-1}(w),{\Delta_C}_{|\pi_C^{-1}(w)}\Big)$
with $(U_w,\Delta_w)$, which is log canonical by assumption.
Thus, by inversion of adjunction (see \cite[Theorem]{kawakita07}), 
the pair 
$(U_C,\Delta_C)$ has log canonical singularities over the generic point of $C$. 
Let $w \in C$ be a general point, and 
$S_w$ any log canonical center of
$(U_w,\Delta_w)$. Then there exists a reduced and irreducible
closed subset $S_C \subset U_C$ such that:
\begin{itemize}
\item $S_w={S_C}\cap U_w$, and
\item $S_C$ is a log canonical center of 
$(U_C,\Delta_C)$ over the generic point of $C$.
\end{itemize} 
Moreover, our current assumption implies that 
\begin{itemize}
\item $\dim(e_C(S_C)) = \dim(S_C)$.
\end{itemize} 
Thus, by \cite[Proposition 7.2(ii)]{demailly97}, there exist 
an ample $\bQ$-divisor $A$ and 
an effective $\bQ$-Cartier $\bQ$-divisor $E$ on $U_C$
such that:
\begin{itemize}
\item $e_C^*(- K_{\sF})\sim_{\bQ}A+E$, and
\item for a general point $w \in C$, $\Supp(E)$ does not contain any 
log canonical center of $(U_w,\Delta_w)$.
\end{itemize}
Therefore $(U_C,\Delta_C+\epsilon E)$ is log canonical over the generic point of $C$
for $0<\epsilon \ll 1$.
Notice that $e_C^*(- K_{\sF})-\epsilon E$ is ample
since $e_C^*(- K_{\sF})$ is nef and big, and hence 
$$
 -(K_{U_C/C}+\Delta_C+\epsilon E) \sim_{\bQ} e_C^*(- K_{\sF})-\epsilon E
$$
is ample as well.
But this contradicts Theorem~\ref{thm:-KX/Y_not_ample}, completing the proof of the proposition.
\end{proof}

\begin{cor} \label{Delta_not_0}
Let $\sF$ be an algebraically integrable Fano foliation on a complex projective manifold,
and  $(\tilde{F},\tilde{\Delta})$ its general log leaf.
Suppose that $\sF$ is locally free along the closure of a general leaf.
Then $\tilde{\Delta}\neq 0$.
\end{cor}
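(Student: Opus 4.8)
The plan is to combine Proposition~\ref{lemma:common_lc_center} (equivalently Proposition~\ref{weak_lemma:common_lc_center}) with Lemma~\ref{lemma:singular_locus}, and to reduce the statement $\tilde\Delta\neq 0$ to the assertion that $\sF$ is \emph{not} regular. Indeed, by Lemma~\ref{lemma:singular_locus}, since $\sF$ is locally free along the closure $F$ of a general leaf, we have $\Supp(\tilde\Delta)=\tilde e^{-1}(F\cap\textup{Sing}(\sF))$. So $\tilde\Delta=0$ would force $F\cap\textup{Sing}(\sF)=\emptyset$ for a general leaf, hence (the general leaf sweeping out a dense subset of $X$, as $e:U\to X$ is birational) $\textup{Sing}(\sF)$ would be disjoint from a dense open subset of $X$; being closed, $\textup{Sing}(\sF)$ would then be empty, i.e. $\sF$ would be regular.

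So it suffices to rule out the case that $\sF$ is a regular algebraically integrable Fano foliation on a projective manifold. The key point is Proposition~\ref{lemma:common_lc_center}: there is a closed irreducible $T\subset X$ such that for a general log leaf $(\tilde F,\tilde\Delta)$ there is a log canonical center $S$ of $(\tilde F,\tilde\Delta)$ with $\tilde e(S)=T$. If $\tilde\Delta=0$, then the only way $(\tilde F,0)$ can have a log canonical center is if $\tilde F$ itself is singular — and then the center $S$ is contained in $\textup{Sing}(\tilde F)$, so $\dim S<\dim\tilde F=r$, while if $\tilde\Delta=0$ and $\tilde F$ is smooth there is no log canonical center at all (a smooth pair $(\tilde F,0)$ is terminal, hence has no divisor $E$ over it with discrepancy $-1$). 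In either case we reach a contradiction once we observe that regularity of $\sF$ along $F$ forces $\tilde F\cong F$ to be smooth: for a general leaf of a foliation that is locally free (indeed regular) along $F$, the leaf closure $F$ is a smooth subvariety, so $\tilde e$ is an isomorphism and $(\tilde F,\tilde\Delta)=(F,0)$ with $F$ smooth, which has no log canonical center, contradicting Proposition~\ref{lemma:common_lc_center}.

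Concretely, I would structure it as: (i) assume $\tilde\Delta=0$; (ii) apply Lemma~\ref{lemma:singular_locus} to deduce $F\cap\textup{Sing}(\sF)=\emptyset$ for $F$ a general leaf closure, and conclude $\textup{Sing}(\sF)=\emptyset$ using that general leaves dominate $X$ and $\textup{Sing}(\sF)$ is closed; (iii) since $\sF$ is regular (in particular locally free) along $F$, the leaf closure $F$ is smooth and $\tilde e:\tilde F\to F$ is an isomorphism, so the general log leaf is the smooth pair $(F,0)$; (iv) a smooth variety with zero boundary is terminal, hence has no log canonical center; (v) this contradicts Proposition~\ref{lemma:common_lc_center}, which produces a log canonical center $S$ of $(\tilde F,\tilde\Delta)$ mapping onto some irreducible $T\subset X$. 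Hence $\tilde\Delta\neq 0$.

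The main obstacle I anticipate is step (iii): justifying that regularity of $\sF$ along the general leaf closure $F$ implies $F$ is smooth and the normalization $\tilde e$ is an isomorphism. This should follow from the fact that where $\sF$ is regular the foliation is, locally analytically, a trivial fibration (Frobenius), so the leaves through those points are smooth, and the general leaf meets only such points; but one must be slightly careful that ``locally free along $F$'' versus ``regular along $F$'' is enough, and that no subtlety arises from $F$ being merely the \emph{closure} of a leaf rather than a leaf. An alternative, perhaps cleaner, route that sidesteps this: keep $\tilde e$ possibly non-trivial, but note that if $\tilde\Delta=0$ then $K_{\tilde F}=\tilde e^*K_\sF$ is anti-ample, so $(\tilde F,0)$ is a (normal) Fano with $K_{\tilde F}$ $\bQ$-Cartier; by Proposition~\ref{lemma:common_lc_center} it has a log canonical center, i.e. a divisor over $\tilde F$ of discrepancy $-1$, so $\tilde F$ has strictly log canonical (non-klt) singularities — then invoke Lemma~\ref{lemma:singular_locus} as above to force $\textup{Sing}(\sF)\cap F=\emptyset$, hence (since $\sF$ locally free along $F$ and $F$ general) that $\tilde F=F$ is smooth along a general point, contradicting the presence of a discrepancy $-1$ divisor. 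I would present whichever of these two arguments turns out to need the fewest auxiliary facts, likely the first.
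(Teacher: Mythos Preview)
Your proposal is correct and follows essentially the same route as the paper: assume $\tilde\Delta=0$, use Lemma~\ref{lemma:singular_locus} to get regularity of $\sF$ along the general leaf closure, deduce that the general log leaf is a smooth pair $(F,0)$ (hence log canonical, so the hypothesis of Proposition~\ref{lemma:common_lc_center} is met), and then observe that such a pair has no log canonical center, contradicting the proposition. The paper phrases your step~(iii) as ``$\sF$ is induced by an almost proper map $X\dashrightarrow W$, and $F$ is smooth'', which is exactly the Frobenius-type justification you anticipated; your extra step of promoting ``regular along $F$'' to ``$\textup{Sing}(\sF)=\emptyset$'' is not needed but does no harm.
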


\begin{proof}
Denote by $F$ the closure of a general leaf of $\sF$.
If $\tilde{\Delta}=0$, then
$\sF$ is regular along $F$ by Lemma \ref{lemma:singular_locus}. 
Hence $\sF$ is induced by an almost proper map $X\map W$, and $F$ is smooth. 
In particular $(\tilde{F},\tilde{\Delta})$ is log canonical. 
But this contradicts  Proposition~\ref{lemma:common_lc_center}. 
This proves that $\tilde{\Delta}\neq 0$.
\end{proof}


\subsection{Foliations on $\p^n$}\label{fols_in_p^n} \

\smallskip

The \emph{degree} $\deg(\sF)$ of a foliation $\sF$ of rank $r$ on $\p^n$ is defined as the degree 
of the locus of tangency of $\sF$ with a general linear subspace $\p^{n-r}\subset \p^n$. 
By \ref{q-forms}, a foliation on $\p^n$  of  rank $r$ and degree $d$ is given by a twisted $q$-form
$\omega\in H^0\big(\p^n,\Omega^{q}_{\p^n}(q+d+1)\big)$, where $q=n-r$. 
Thus
$$
d \ =\ \deg(K_{\sF})\ +\ r.
$$

Jouanolou has classified codimension $1$ foliations on $\p^n$ of degree $0$ and $1$.
This has been generalized to arbitrary rank as follows.

\begin{thm} \label{cerveau_deserti} \label{lpt3fold} \
\begin{enumerate}
	\item (\cite[Th\'eor\`eme 3.8]{cerveau_deserti}.) 
		A codimension $q$ foliation  of degree $0$ on $\p^n$
		is induced by a  linear projection $\p^n \dashrightarrow \p^q$.
	\item (\cite[Theorem 6.2]{loray_pereira_touzet_2}.)
		A codimension $q$ foliation $\sF$ of degree $1$ on $\p^n$
		satisfies one of the following conditions. 
		\begin{itemize}
			\item $\sF$ is induced by a dominant rational  map $\p^n\dashrightarrow \p(1^{q},2)$,
				defined by $q$ linear forms $L_1,\ldots,L_q$
				and one quadratic form $Q$; or
			\item $\sF$ is the linear pullback of a foliation on $\p^{q+1}$ induced by a global 
				holomorphic vector field.
		\end{itemize}
\end{enumerate}
\end{thm}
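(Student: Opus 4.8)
Both items are established in the works cited in the statement; here is the scheme of proof one would follow. The plan is to translate everything into the language of twisted differential forms via \ref{q-forms}: a codimension $q$ foliation $\sF$ on $\p^n$ of degree $d$ is the same datum as a global section $\omega\in H^0\big(\p^n,\Omega^q_{\p^n}(q+d+1)\big)$ that is locally decomposable and integrable and whose zero locus has codimension at least $2$. So the first step, in both parts, is to describe the relevant space of global twisted $q$-forms. Wedging the Euler sequence $0\to\Omega^1_{\p^n}\to\sO_{\p^n}(-1)^{\oplus(n+1)}\to\sO_{\p^n}\to 0$ and chasing cohomology (Bott's formulas), one identifies $H^0\big(\p^n,\Omega^q_{\p^n}(k)\big)$ with the space of polynomial $q$-forms on $\mathbb{C}^{n+1}$ whose coefficients are homogeneous of degree $k-q$ and which are annihilated by contraction with the radial vector field $R=\sum_i x_i\,\partial_{x_i}$; in particular $h^0\big(\p^n,\Omega^q_{\p^n}(q+1)\big)=\binom{n+1}{q+1}$ and $h^0\big(\p^n,\Omega^q_{\p^n}(q+2)\big)=(q+1)\binom{n+2}{q+2}$.

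For part (1) $(d=0)$, this description would give a canonical isomorphism $H^0\big(\p^n,\Omega^q_{\p^n}(q+1)\big)\cong\wedge^{q+1}W$, where $W=H^0\big(\p^n,\sO_{\p^n}(1)\big)$, under which a decomposable multivector $L_0\wedge\cdots\wedge L_q$ corresponds to the form $\iota_R(dL_0\wedge\cdots\wedge dL_q)$. It then remains to observe that the local decomposability of $\omega$ forces the corresponding element of $\wedge^{q+1}W$ to be a decomposable multivector --- this is exactly the Pl\"ucker relations --- that for such an $\omega$ integrability is automatic and the zero locus is the linear space $\{L_0=\cdots=L_q=0\}$, of codimension $q+1\ge 2$, and finally that the foliation attached to $\iota_R(dL_0\wedge\cdots\wedge dL_q)$ has as its leaves the fibers of the linear projection $[L_0:\cdots:L_q]\colon\p^n\dashrightarrow\p^q$. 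Once Bott's formula is in hand this part is essentially multilinear algebra, and it contains Jouanolou's classification as the case $q=n-1$.

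For part (2) $(d=1)$, the space $H^0\big(\p^n,\Omega^q_{\p^n}(q+2)\big)$ is strictly larger than the space of decomposable multivectors, and the point is to read off the stated dichotomy from the decomposability and integrability of $\omega$. The goal is to show that, after a linear change of homogeneous coordinates, exactly one of two situations occurs: either $\omega$ involves only $q+2$ of the coordinates, in which case $\sF$ is the linear pullback of a foliation $\sG$ on $\p^{q+1}$ --- and $\sG$, having rank $1$ and degree $1$, is then induced by a global holomorphic vector field; or $\omega$ equals, up to contraction with $R$, a wedge $dL_1\wedge\cdots\wedge dL_q\wedge dQ$ of the differentials of $q$ linear forms and one quadric, so that the leaves of $\sF$ are the fibers of $[L_1:\cdots:L_q:Q]\colon\p^n\dashrightarrow\p(1^q,2)$. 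I would try to separate the two cases by restricting $\sF$ to general linear subspaces of smaller dimension and inducting on $n$, the base case $\p^{q+1}$ being a foliation by curves, which in degree $1$ is a (linear) vector field; this is how the cited reference proceeds, via the lower-dimensional classification together with a reconstruction argument for the ambient normal form.

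The \emph{main obstacle} is part (2): unlike the degree $0$ case, the degree $1$ space of twisted forms is not merely a space of decomposable multivectors, so cutting it down to exactly the two normal forms above --- and in particular giving an intrinsic characterization of the ``linear pullback'' alternative while excluding any further possibility --- is genuinely delicate and is the technical core of the argument; part (1), by contrast, is comparatively formal.
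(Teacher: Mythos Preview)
The paper does not give its own proof of this theorem: it is stated as a citation result, with part (1) attributed to Cerveau--Deserti and part (2) to Loray--Pereira--Touzet, and the text moves on immediately to the description in \ref{deg1_fols_in_pn} without supplying an argument. So there is nothing in the paper to compare your proposal against.

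That said, your sketch is a fair outline of how the cited proofs go. For part (1), the identification $H^0\big(\p^n,\Omega^q_{\p^n}(q+1)\big)\cong\wedge^{q+1}W$ via the Euler sequence, followed by the observation that pointwise local decomposability forces the Pl\"ucker relations to hold identically, is exactly the mechanism in Cerveau--Deserti. For part (2), the strategy of slicing by generic linear subspaces and inducting down to a rank-one foliation on $\p^{q+1}$ is the one used by Loray--Pereira--Touzet; you are right that the reconstruction step --- lifting the normal form on the slice back to $\p^n$ and showing that only the two stated outcomes survive --- is where the real work lies, and your proposal does not attempt to fill that in, which is appropriate for a result the paper is merely quoting.
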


\begin{say}  \label{deg1_fols_in_pn}
Let $\sF$ be a codimension $q$ foliation of degree $1$ on $\p^n$. 

In the first case described in Theorem~\ref{lpt3fold}(2), $\sF$ is induced by the $q$-form on $\mathbb{C}^{n+1}$
\begin{multline*}
\Omega  =  \sum_{i=1}^q(-1)^{i+1}L_idL_1\wedge\cdots\wedge \widehat{dL_i}\wedge\cdots \wedge dL_q\wedge dQ +(-1)^{q}2QdL_1\wedge\cdots\wedge dL_q\\
 =  (-1)^q \Big(\sum_{i=q+1}^{n+1}L_j\frac{\partial Q}{\partial L_i}\Big)dL_1\wedge\cdots\wedge dL_q\\
+\sum_{i=1}^{q}\sum_{j=q+1}^{n+1}(-1)^{i+1}L_i\frac{\partial Q}{\partial L_j}
dL_1\wedge\cdots\wedge \widehat{dL_i}\wedge\cdots \wedge dL_q\wedge dL_j,
\end{multline*}
where 
$L_{q+1},\ldots,L_{n+1}$ are linear forms such that 
$L_{1},\ldots,L_{n+1}$ are linearly independent.
Thus, the singular locus of $\sF$ is the union of the quadric 
$\{L_1=\cdots=L_q=Q=0\}\cong Q^{n-q-1}$ and the linear subspace 
$\{\frac{\partial Q}{\partial L_{q+1}}=\cdots =\frac{\partial Q}{\partial L_{n+1}}=0\}$. 

In the second case described in Theorem~\ref{lpt3fold}(2), the singular locus of $\sF$ is the union of linear
subspaces of codimension at least $2$ containing the center $\p^{n-q-2}$ of the projection.

\end{say}

\subsection{Foliations on $Q^n$}\label{fols_in_Q^n} \

In this subsection we classify del Pezzo foliations on smooth quadric hypersurfaces. 

\begin{prop}\label{lemma:fols_in_Q^n} 
Let $\sF$ be a codimension $q$ del Pezzo foliation on a smooth quadric hypersurface $Q^n \subset \p^{n+1}$. 
Then $\sF$ is induced by the restriction of a linear projection $\p^{n+1} \dashrightarrow \p^{q}$.
\end{prop}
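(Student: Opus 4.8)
The plan is to reduce the classification of del Pezzo foliations $\sF$ of codimension $q$ on $Q^n$ to a computation with twisted $q$-forms, exactly as in Section~\ref{fols_in_p^n}. First I would record that a del Pezzo foliation $\sF$ of rank $r = n-q$ on $Q^n$ has $-K_\sF \equiv (r-1)\sO_{Q^n}(1)$, since $\Pic(Q^n) = \bZ$ for $n \ge 3$ (and the cases $n = 1, 2$ are handled directly or vacuously). Then $\deg(K_\sF) = -(r-1)$, and by the formula $\omega \in H^0\big(Q^n, \Omega^q_{Q^n} \otimes \det(N_\sF)\big)$ from \ref{q-forms}, together with $\det(N_\sF) = \sO_{Q^n}(-K_\sF) \otimes \sO_{Q^n}(K_{Q^n}) = \sO_{Q^n}\big((r-1) - n\big) = \sO_{Q^n}(-q-1)$ — wait, one must be careful with signs; I would instead use $N_\sF^* \hookrightarrow \Omega^1_{Q^n}$ and $\det(N_\sF) = \sO_{Q^n}(K_{Q^n}) \otimes \sO_{Q^n}(-K_\sF)^{-1} \otimes \det(T_X)$, and pin down the exact twist so that $\sF$ is given by $\omega \in H^0\big(Q^n, \Omega^q_{Q^n}(q+1)\big)$.

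Next I would compute this cohomology group. The key tool is the restriction sequence $0 \to \Omega^q_{\p^{n+1}}(q-1) \to \Omega^q_{\p^{n+1}}(q+1)_{|Q^n} \to \Omega^q_{Q^n}(q+1) \to \cdots$ coming from the conormal sequence $0 \to \sO_{Q^n}(-2) \to \Omega^1_{\p^{n+1}}|_{Q^n} \to \Omega^1_{Q^n} \to 0$, combined with Bott vanishing on $\p^{n+1}$. The expected outcome is that $H^0\big(Q^n, \Omega^q_{Q^n}(q+1)\big)$ is spanned precisely by the $q$-forms that are restrictions of the degree-$0$ $q$-forms on $\p^{n+1}$, i.e.\ those of the shape $dL_1 \wedge \cdots \wedge dL_q$ for linear forms $L_i$ — equivalently, by the forms inducing linear projections $\p^{n+1} \dashrightarrow \p^q$ restricted to $Q^n$. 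Since a linear projection of $\p^{n+1}$ has degree $0$, its restriction to $Q^n$ restricts a degree-$0$ codimension-$q$ foliation; one checks this restricted foliation has $-K_\sF \equiv (r-1)\sO_{Q^n}(1)$, i.e.\ is indeed del Pezzo, so the correspondence is sharp.

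The main obstacle I anticipate is the cohomology computation: showing that $H^0\big(Q^n, \Omega^q_{Q^n}(q+1)\big)$ contains nothing beyond the restrictions of decomposable constant-coefficient forms on $\p^{n+1}$, and in particular has the right dimension. The relevant Bott-type vanishing statements $H^i\big(\p^{n+1}, \Omega^q_{\p^{n+1}}(t)\big) = 0$ for the twists $t \in \{q-1, q+1\}$ that appear are standard, but one must track the connecting maps in the long exact sequence carefully, and also verify local decomposability and integrability of the resulting global section — though these are automatic for sections coming from $\p^{n+1}$. An alternative, possibly cleaner route is to use the Euler-type sequences for $\Omega^q_{Q^n}$ directly, or to invoke an existing classification of low-degree foliations on quadrics from the literature on foliations; I would check whether \cite{loray_pereira_touzet_2} or \cite{cerveau_deserti} already covers $Q^n$, in which case the argument shortens to matching the numerical invariants. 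Once $\omega$ is identified with the restriction of $dL_1 \wedge \cdots \wedge dL_q$, the conclusion that $\sF$ is induced by the linear projection $[L_1 : \cdots : L_q]$ restricted to $Q^n$ is immediate.
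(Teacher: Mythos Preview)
Your approach is genuinely different from the paper's, and the cohomological step can be pushed through for $q\ge 2$: combining the conormal sequence of $Q^n\subset\p^{n+1}$ with Bott vanishing (and an induction on $q$ to get $H^i\big(Q^n,\Omega^{q-1}_{Q^n}(q-1)\big)=0$ for $i=0,1$), one checks that the restriction $H^0\big(\p^{n+1},\Omega^q_{\p^{n+1}}(q+1)\big)\to H^0\big(Q^n,\Omega^q_{Q^n}(q+1)\big)$ is an isomorphism. (For $q=1$ it is \emph{not}: the kernel is spanned by $dF$, with $Q=\{F=0\}$, and a dimension count shows the map misses a one-dimensional cokernel. The paper likewise treats $q=1$ by citing \cite{codim_1_del_pezzo_fols}.)

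The real gap is the next sentence. You identify the degree-$0$ twisted $q$-forms on $\p^{n+1}$ with forms ``of the shape $dL_1\wedge\cdots\wedge dL_q$'', but this is false: one has $H^0\big(\p^{n+1},\Omega^q_{\p^{n+1}}(q+1)\big)\cong\wedge^{q+1}(\mathbb{C}^{n+2})^*$ via $\alpha\mapsto\iota_R\alpha$ ($R$ the Euler field), and $\iota_R\alpha$ is locally decomposable on $\p^{n+1}$ --- hence defines a linear projection, by Theorem~\ref{cerveau_deserti}(1) --- precisely when $\alpha$ is decomposable. For $2\le q+1\le n$ a generic $\alpha$ is not. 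So once $\omega$ lifts to $\tilde\omega=\iota_R\alpha$, you still owe the implication ``$\omega$ locally decomposable and integrable on $Q^n$ $\Rightarrow$ $\alpha$ decomposable'', and this is not automatic: it is essentially the content of the proposition, restated. The paper avoids this algebraic difficulty by a geometric maneuver: it uses that $\textup{Sing}(\sF)\neq\emptyset$ (from \cite{codim_1_del_pezzo_fols}), projects $Q^n$ from a singular point to $\p^n$, pushes $\omega$ through the blow-up to a section of $\Omega^q_{\p^n}(q+2)$, and then invokes the classification of degree $\le 1$ foliations on $\p^n$ (Theorem~\ref{lpt3fold}) together with the constraint that the exceptional quadric $Q^{n-2}\subset\p^n$ be invariant or singular for the induced foliation, to exclude the degree-$1$ case.
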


\begin{proof}
If $q=1$, then the result follows from \cite[Theorem 1.3]{codim_1_del_pezzo_fols}.
So we assume from now on that $q \ge 2$.

By \cite[Proposition 7.7]{fano_fols}, $\sF$ is algebraically integrable, and
its singular locus is nonempty by 
\cite[Theorem 6.1]{codim_1_del_pezzo_fols}.

Let $x \in Q^n$ be a point in the  singular locus of $\sF$, and
consider the restriction 
$\varphi : Q^n \dashrightarrow \p^n$
to $Q^n$ of the linear projection 
$\psi: \p^{n+1}\dashrightarrow \p^n$
from $x$.
Let $f: Y \to Q^n$ be the blow-up of $Q^n$ at $x$ 
with exceptional divisor $E\cong \p^{n-1}$, and $g : Y \to \p^n$ the induced morphism. 
Notice that $g$ is the blow-up of $\p^n$ along 
the smooth codimension $2$ quadric $Z=\phi(\textup{Exc}(\varphi))\cong Q^{n-2}$. 
Denote by $H$ the hyperplane of $\p^n$ containing $Z$, and by
$F$ the exceptional divisor of $g$.
Note that $g(E)=H$, and $f(F)$ is the hyperplane section of $Q^n$ cut out by $T_xQ^n$. 
The codimension $q$ del Pezzo foliation $\sF$ 
is defined by a nonzero section $\omega\in H^0\big(Q^n, \Omega_{Q^n}^q(q+1)\big)$
vanishing at $x$.
So it induces a twisted $q$-form
$\alpha \in H^0\big(Y, \Omega_Y^q\otimes f^*\sO_Q(q+1)\otimes \sO_Y(-q E)\big)
\cong H^0\big(Y, \Omega_Y^q\otimes g^*\sO_{\p^n}(q+2)\otimes \sO_Y(- F)\big)$. The restriction
of $\alpha$ to $Y \setminus F$ induces a twisted $q$-form
$\tilde \alpha \in H^0\big(\p^n, \Omega_{\p^n}^q(q+2)\big)$ such that 
$\tilde{\alpha}_z(\vec{v}_1,\vec{v}_2,\ldots,\vec{v}_q)=0$ for any $z \in Z$, 
$\vec{v}_1 \in T_z \p^n$, and $\vec{v}_i \in T_z Z$, $2 \le i\le q$. 
Denote by $\tilde \sF$ the foliation on $\p^n$ induced by $\tilde \alpha$.
There are two possibilities:
\begin{itemize}
	\item Either $\tilde \alpha$ vanishes along the hyperplane $H$ of $\p^n$ containing  $Z\cong Q^{n-2}$,
		and hence $\tilde \sF$ is a degree $0$ foliation on $\p^n$; or  
	\item $\tilde \sF$ is a degree $1$ foliation on $\p^n$, and either 
	$Z$ is contained in the singular locus of $\tilde\sF$, or 
	$Z$ is invariant under $\tilde\sF$.
\end{itemize}

We will show that only the first possibility occurs.
In this case, it follows from Theorem~\ref{cerveau_deserti}(1)
that $\sF$ is induced by the restriction of a linear projection $\p^{n+1} \dashrightarrow \p^{q}$.

Suppose to the contrary that $\tilde \sF$ is a degree $1$ foliation on $\p^n$, 
and either $Z$ is contained in the singular locus of $\tilde\sF$, or $Z$ is invariant under $\tilde\sF$.
Recall the  description of the two types of codimension $q$ degree $1$ on $\p^n$
from Theorem~\ref{lpt3fold}(2):
\begin{enumerate}
\item Either the foliation is induced by a dominant rational  map 
$\p^n\dashrightarrow \p(1^{q},2)$,
defined by $q$ linear forms $L_1,\ldots,L_q$
and one quadratic form $Q$; or
\item it is the linear pullback of a foliation on $\p^{q+1}$ induced by a global 
holomorphic vector field.
\end{enumerate} 
In case (2),  the closure of the leaves and the singular locus are all cones with vertex $\p^{n-q-2}$.
Since $Z\cong Q^{n-2}$ is a smooth quadric, we conclude that $\tilde\sF$ must be of type (1),
$Z$ is invariant under $\tilde\sF$, and $\tilde \alpha$ is as in the description of $\Omega$ in \ref{deg1_fols_in_pn}.

Since $Z$ is invariant under $\tilde{\sF}$, we must have 
$\{L_1=\cdots=L_q=Q=0\}\cong Q^{n-q-1}\subset Z$. 
We assume without loss of generality that $H=\{L_1=0\}$. 
Notice that $\{L_1=\cdots=L_q=Q=0\}\subsetneq Z$ since $q\ge 2$.
Let $L_{q+1},\ldots,L_{n+1} \in \mathbb{C}[t_1,\ldots,t_{n+1}]$ be linear forms 
such that $L_{1},\ldots,L_{n+1}$ are linearly independent.
Since $Z$ is invariant under $\tilde{\sF}$, $\varphi^*\tilde{\alpha}$ vanishes identically along 
$f(F)=\{\varphi^*L_1=0\}$. 
It follows from the description of the singular locus of $\tilde \sF$ in \ref{deg1_fols_in_pn}
that we must have 
$\{\varphi^*\frac{\partial Q}{\partial L_{q+1}}=\cdots =
\varphi^*\frac{\partial Q}{\partial L_{n+1}}=0\}=\{\varphi^*L_1=0\}$.
Hence, for $i\in\{q+1, \ldots , n+1\}$,
$\varphi^*\frac{\partial Q}{\partial L_{i}}=a_i\varphi^*L_1$
for some complex number 
$a_i\in\mathbb{C}$.
Then $\psi^*\tilde{\alpha}\in (\psi^*L_1) \cdot H^0\big(\p^{n+1}, \Omega_{\p^{n+1}}^q(q+1)\big)
\subset H^0\big(\p^{n+1}, \Omega_{\p^{n+1}}^q(q+2)\big)$. Therefore, $\tilde{\sF}$
is induced by a degree $0$ foliation on $\p^{n+1}$. So $\tilde{\sF}$ itself is a degree $0$ foliation on $\p^{n}$,
contrary to our assumption.
This completes the proof of the proposition.
\end{proof}

%
%

\section{{Proof of Theorem \ref{thm:main}}}

Let $X\not\cong \p^n$  be an $n$-dimensional Fano manifold with $\rho(X)=1$, 
and $\sF$ a del Pezzo foliation of rank $r\ge 3$ on $X$.
By Theorem~\ref{thma}, $\sF$ is algebraically integrable. 
Let $F$ be the closure of a general leaf of $\sF$, $\tilde{e} : \tilde{F} \to X$ its normalization, and $(\tilde{F},\tilde{\Delta})$ the corresponding log leaf. 
By assumption, $(\tilde{F},\tilde{\Delta})$ is log canonical, and $\sF$ is  locally free along $F$.

Let $\sA$ be  an ample line bundle on $X$ such that $\textup{Pic}(X)=\bZ[\sA]$, and set $\sL:=\tilde{e}^*\sA$. 
Then $\det(\sF)\cong\sA^{r-1}$, and 
$$
-(K_{\tilde{F}}+\tilde{\Delta}) \ \sim_\mathbb{Z} \ -\tilde{e}^*K_\sF \ \sim_\mathbb{Z} \ (r-1)c_1(\sL).
$$
By Corollary~\ref{Delta_not_0}, $\tilde{\Delta}\neq 0$.
So we can apply Theorem \ref{thm:classification_del_pezzo}.
Taking into account that 
if $\tilde F$ is singular, then its singular locus is contained in the support 
of $\tilde \Delta$ by Lemma~\ref{lemma:singular_locus}, 
we get the following possibilities 
for the triple $(\tilde{F},\sL,\tilde{\Delta})$:
\begin{enumerate}
\item $\big(\tilde{F},\sL,\sO_{\tilde{F}}(\tilde\Delta)\big)\cong (\p^r,\sO_{\p^r}(1),\sO_{\p^r}(2))$. 
\item $\big(\tilde{F},\sL,\sO_{\tilde{F}}(\tilde\Delta)\big) \cong (Q^r,\sO_{Q^r}(1),\sO_{Q^r}(1))$, 
where $Q^r$ is a smooth quadric hypersurface in $\p^{r+1}$.
\item $r=1$ and $\big(\tilde{F},\sL,\sO_{\tilde{F}}(\tilde\Delta)\big)\cong (\p^1,\sO_{\p^1}(d),\sO_{\p^1}(2))$ for some integer $d\ge 3$.
\item $r=2$ and $\big(\tilde{F},\sL,\sO_{\tilde{F}}(\tilde\Delta)\big)\cong(\p^2,\sO_{\p^2}(2),\sO_{\p^2}(1))$.
\item $(\tilde{F},\sL)\cong (\p_{\p^1}(\sE),\sO_{\p_{\p^1}(\sE)}(1))$  for an ample vector bundle $\sE$ 
of rank $r$ on $\p^1$. 
Moreover, one of the following holds.
\begin{enumerate}
\item $r=2$ and $\sE=\sO_{\p^1}(1)\oplus\sO_{\p^1}(a)$ for some $a\ge 2$, and $\tilde\Delta\sim_\mathbb{Z} \sigma+ f$
where $\sigma$ is the minimal section and $f$ a fiber of $\p_{\p^1}(\sE) \to \p^1$.
\item $r=2$ and $\sE=\sO_{\p^1}(2)
\oplus\sO_{\p^1}(a)$ for some $a\ge 2$, and $\tilde\Delta$ is a minimal section.
\item $r=3$ and $\sE=\sO_{\p^1}(1)\oplus \sO_{\p^1}(1)
\oplus\sO_{\p^1}(a)$ for some $a \ge 1$, and $\tilde{\Delta}=\p_{\p^1}(\sO_{\p^1}(1)\oplus \sO_{\p^1}(1))$.
\end{enumerate}
\item $\sL$ is very ample, and embeds $(\tilde{F},\tilde{\Delta})$ as a cone over $\big((Z,\sM),(\Delta_Z, \sM_{|\Delta_Z})\big)$,
where $(Z,\sM,\Delta_Z)$ satisfies one of the conditions (2-5) above. 
\end{enumerate}

\medskip

First we show that case (1) cannot occur. 
Suppose otherwise that $(\tilde{F},\sL,\sO_{\tilde{F}}(\tilde{\Delta}))\cong (\p^r,\sO_{\p^r}(1),\sO_{\p^r}(2))$.
By Proposition~\ref{lemma:common_lc_center},
there is a common point $x$ contained in the closure of a general leaf.
Since $(\tilde{F},\sL)\cong (\p^r,\sO_{\p^r}(1))$, 
there is an irreducible $(n-1)$-dimensional family of lines on $X$ through $x$
sweeping out the whole $X$. 
Lemma~\ref{lemma:bound_index} together with Theorem~\ref{kobayashi_ochiai} imply that 
$X\cong \p^n$, contrary to our assumptions. 

\medskip

Next suppose that we are in case (2) or (5c).
Note that $\tilde\Delta$ is irreducible in either case (in case (2), $\tilde F$ is a smooth quadric of dimension $r\ge 3$ and $\tilde\Delta$ is a hyperplane section).
By  Proposition~\ref{lemma:common_lc_center}, the image $T$ of $\tilde\Delta$ in $X$ is contained in the closure of a general leaf of $\sF$.
There is a family of lines on $X$, all contained in leaves of $\sF$ and meeting $T$,
that  sweep out the whole $X$. 
In case (2), this corresponds to the family of lines on $\tilde F\cong Q^r$.
In case (5c), it corresponds to the family of lines on fibers of $\tilde F\to \p^1$.
Let $x\in T$ be a general point. 
Then there is an irreducible $(n-2)$-dimensional family of lines on $X$ through $x$, and the general line in this family is 
free by Remark~\ref{rem:general_line_is_free}.
By Lemma~\ref{lemma:bound_index}, $\iota_X\ge n$.
Theorem~\ref{kobayashi_ochiai} then implies that $X\cong Q^n$.
By Proposition~\ref{lemma:fols_in_Q^n},
 $\sF$ is induced by the restriction to $Q^n$ of a linear projection $\p^{n+1} \dashrightarrow \p^{n-r}$.

\medskip
 
 Cases (3), (4), (5a) and (5b) do not occur since we are assuming $r\ge 3$. 
 
\medskip

Finally suppose that we are in case (6): $\sL$ is very ample, and embeds $(\tilde{F},\tilde{\Delta})$ 
as a cone over the pair $\big((Z,\sM),(\Delta_Z, \sM_{|\Delta_Z})\big)$,
where  $(Z,\sM,\Delta_Z)$ satisfies one of the conditions (2-5) above. 
As in the proof of Theorem~\ref{thm:classification_del_pezzo}, set $m:=\dim(Z)$, $s:=r-m$, 
and let $e : Y \to \tilde F$ be the blow-up of $\tilde F$ along its vertex, with exceptional divisor $E$. 
Then $Y\cong \p_Z(\sM\oplus \sO_Z^{\oplus s})$, with natural projection $\pi : Y\to Z$. 
Moreover the strict transform $\Delta_Y$ of $\tilde \Delta$ in $Y$
satisfies $\Delta_Y=\pi^*\Delta_Z$.
A straightforward computation gives 
$$
K_Y+\pi^*\Delta_Z\sim_\mathbb{Z}e^*(K_{\tilde{F}}+\tilde{\Delta})+(m-2) E.
$$
On the other hand, by \cite[8.3]{fano_fols}, there exists an effective divisor $B$ on $Y$
such that 
$$K_Y+E+B\sim_\mathbb{Z} e^*(K_{\tilde{F}}+\tilde{\Delta}).$$
Therefore
$$\Delta_Y=\pi^*\Delta_Z\sim_\mathbb{Z}(m-1)E+B.$$
We conclude that $m=1$.
Thus $\tilde F$ is isomorphic to a cone with vertex $V\cong \p^{r-2}$
over a rational normal curve of degree $d \ge 2$,  
and $\tilde{\Delta}$ is the union of two rulings $\Delta_1$ and $\Delta_2$, each isomorphic to $\p^{r-1}$.

By Proposition~\ref{lemma:common_lc_center},  
there is a log canonical center $S$ of $(\tilde{F},\tilde{\Delta})$ whose image in $X$
does not depend on the choice of the general log leaf. 
Either $S=V$, or $S=\Delta_i$ for some $i\in\{1,2\}$. 
If $S=V$, then the lines  through a general point of $\tilde e(V)$ sweep out the whole $X$. 
Lemma~\ref{lemma:bound_index} together with Theorem~\ref{kobayashi_ochiai} then imply that 
$X\cong \p^n$, contrary to our assumptions. 
We conclude that the image of $V$ in $X$ varies with $(\tilde{F},\tilde{\Delta})$,
and, for some $i\in\{1,2\}$, $T=\tilde e(\Delta_i)$ is contained in the closure of a general leaf.
There is a family of lines on $X$, all contained in leaves of $\sF$ and meeting $T$,
that  sweep out the whole $X$. 
Let $x\in T$ be a general point. 
Since $V\subset \Delta_i$, and the  image of $V$ in $X$ varies with $(\tilde{F},\tilde{\Delta})$,
there is an irreducible $(n-2)$-dimensional family of lines on $X$ through $x$, and the general line in this family is 
free by Remark~\ref{rem:general_line_is_free}.
By Lemma~\ref{lemma:bound_index}, $\iota_X\ge n$.
Theorem~\ref{kobayashi_ochiai} then implies that $X\cong Q^n$.
By Proposition~\ref{lemma:fols_in_Q^n},
 $\sF$ is induced by the restriction of a linear projection $\p^{n+1} \dashrightarrow \p^{n-r}$.
\qed

\medskip

Using Theorem~\ref{nakayama} and the 
same arguments as in the proof of Theorem \ref{thm:main}, 
one can get the following result for del Pezzo foliations of rank $2$,
without the assumption that $\sF$ is log canonical along  a general leaf.

\begin{prop}\label{classification_r=2}
Let $\sF$ be a del Pezzo foliation of rank $2$ on a complex projective manifold 
$X\not\cong \p^n$ with $\rho(X)=1$, and  suppose that $\sF$ is locally free along  a general leaf. 
Denote by $(\tilde F,\tilde \Delta)$ the general log leaf  of  $\sF$, and by 
$\sL$ the pullback to $\tilde F$ of the ample generator of $\Pic(X)$.
Then the triple $(\tilde F,\sO_F(\tilde\Delta),\sL)$ is isomorphic to one of the following.
\begin{enumerate}
\item $\big(\p^2,\sO_{\p^2}(1),\sO_{\p^2}(2)\big)$;
\item $\big(\p^2,\sO_{\p^2}(2),\sO_{\p^2}(1)\big)$;
\item $\big(\mathbb{F}_d,\sO_{\mathbb{F}_d}(\sigma+f),\sO_{\mathbb{F}_d}(\sigma+(d+1)f)\big)$, 
where $d\ge 0$, $\sigma$ is a minimal section, and $f$ is a fiber of $\mathbb{F}_d \to \p^1$; 
\item $\big(\p(1,1,d),\sO_{\p(1,1,d)}(2\ell),\sO_{\p(1,1,d)}(d\ell)\big)$, 
where $d\ge 2$, and $\ell$ is a ruling of the cone  $\p(1,1,d)$.
\end{enumerate}
\end{prop}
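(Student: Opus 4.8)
The plan is to reproduce the proof of Theorem~\ref{thm:main} almost word for word, with the classification of log del Pezzo pairs (Theorem~\ref{thm:classification_del_pezzo}), which requires log canonicity, replaced by Nakayama's Theorem~\ref{nakayama}, valid for surface pairs without any singularity hypothesis. By Theorem~\ref{thma}, $\sF$ is algebraically integrable; let $F$ be the closure of a general leaf, $\tilde e:\tilde F\to X$ its normalization, and $(\tilde F,\tilde\Delta)$ the corresponding general log leaf. As $X$ is smooth, $K_\sF$ is Cartier; writing $\sA$ for the ample generator of $\Pic(X)$ and $\sL:=\tilde e^*\sA$, we then have $-(K_{\tilde F}+\tilde\Delta)\sim_\bZ-\tilde e^*K_\sF=\sL$, which is Cartier and ample since $\tilde e$ is finite. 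By Corollary~\ref{Delta_not_0} we have $\tilde\Delta\neq 0$, $\dim\tilde F=2$, and if $\tilde F$ is singular its singular locus lies in $\Supp\tilde\Delta$ by Lemma~\ref{lemma:singular_locus}; hence Theorem~\ref{nakayama} applies to $(\tilde F,\tilde\Delta)$ and leaves its four possibilities. (Note that $X$ is automatically Fano, since it carries free rational curves and $\rho(X)=1$, so Lemma~\ref{lemma:bound_index} and Theorem~\ref{kobayashi_ochiai} are available.)

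In each case I would read off $\sO_{\tilde F}(\tilde\Delta)$ and compute $\sL=-K_{\tilde F}-\tilde\Delta$. If $\tilde F\cong\p^2$, then $\sL\cong\sO_{\p^2}(3-\deg\tilde\Delta)$, giving case (1) when $\deg\tilde\Delta=1$ and case (2) when $\deg\tilde\Delta=2$. If $\tilde F\cong\mathbb{F}_d$ and $\tilde\Delta\sim_\bZ\sigma+f$, then from $-K_{\mathbb{F}_d}\sim_\bZ 2\sigma+(d+2)f$ one gets $\sL\sim_\bZ\sigma+(d+1)f$, which is case (3). If $\tilde F\cong\p(1,1,d)$ and $\tilde\Delta\sim_\bZ 2\ell$, then pushing $-K_{\mathbb{F}_d}\sim_\bZ 2\sigma+(d+2)f$ forward along the minimal resolution $\mathbb{F}_d\to\p(1,1,d)$ gives $-K_{\p(1,1,d)}\sim_\bZ(d+2)\ell$, so $\sL\cong\sO_{\p(1,1,d)}(d\ell)$, which is case (4).

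The remaining Nakayama possibility --- $\tilde F\cong\mathbb{F}_d$ with $\tilde\Delta$ a minimal section $\sigma$, where one computes $\sL\sim_\bZ\sigma+(d+2)f$, which is absent from the list --- must be excluded, and this is where the argument of Theorem~\ref{thm:main} returns. Although we have not assumed $\sF$ to be log canonical along a general leaf, here $(\mathbb{F}_d,\sigma)$ is a simple normal crossing pair, hence log canonical, so Proposition~\ref{lemma:common_lc_center} applies. Its only log canonical center is $\tilde\Delta=\sigma$, so the curve $T:=\tilde e(\sigma)\subset X$ does not depend on the general leaf, and the general leaf contains $T$. Fix a general $x\in T$. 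In each such leaf, the fibre of $\mathbb{F}_d\to\p^1$ through the point of $\sigma$ lying over $x$ maps to a line of $X$ (as $\sL\cdot f=1$), and letting the leaf vary over its $(n-2)$-dimensional family produces an $(n-2)$-dimensional family of lines through $x$ whose general member is free by Remark~\ref{rem:general_line_is_free}. Lemma~\ref{lemma:bound_index} then gives $\iota_X\ge n$, so by Theorem~\ref{kobayashi_ochiai} either $X\cong\p^n$, contrary to hypothesis, or $X\cong Q^n$; in the latter case Proposition~\ref{lemma:fols_in_Q^n} shows $\sF$ is induced by a linear projection, whose general log leaf is of type (3) with $d=0$, contradicting $\tilde\Delta=\sigma$. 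Hence this case cannot occur.

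I expect the real work to lie not in any single hard step but in the parallel bookkeeping that the surviving cases do not themselves collapse through Proposition~\ref{lemma:common_lc_center}. When $\tilde\Delta$ is a conic one must rule out its being reduced --- a reduced conic would, again via Proposition~\ref{lemma:common_lc_center}, yield an $(n-1)$-dimensional family of lines through a point of $X$, forcing $X\cong\p^n$ --- so that $\tilde\Delta$ is a double line and case (2) holds; and when $\tilde F\cong\mathbb{F}_d$ with $d\ge1$, one must check that the stable log canonical center of $(\mathbb{F}_d,\sigma+f)$ can only be a fibre, since taking it to be the minimal section or the point $\sigma\cap f$ would, as in the previous paragraph, force $X\cong\p^n$ or $X\cong Q^n$ with $d=0$, a contradiction. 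This case-by-case verification, modelled on the proof of Theorem~\ref{thm:main}, is the main obstacle.
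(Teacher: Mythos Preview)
Your approach is exactly the one the paper indicates: apply Theorem~\ref{nakayama} in place of Theorem~\ref{thm:classification_del_pezzo}, compute $\sL\sim_\bZ -K_{\tilde F}-\tilde\Delta$ in each case, and eliminate Nakayama's case $(\mathbb{F}_d,\sigma)$ by the argument of Theorem~\ref{thm:main} (that pair being log canonical, Proposition~\ref{lemma:common_lc_center} applies and forces $X\cong\p^n$ or $X\cong Q^n$, both impossible here). This is correct.

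Your final paragraph, however, is unnecessary work. The proposition only asserts that the triple $(\tilde F,\sO_{\tilde F}(\tilde\Delta),\sL)$ lies in the given list; it does not claim that every entry occurs, nor does it pin down the divisor $\tilde\Delta$ itself, only its linear equivalence class. So once you have reduced to Nakayama's four cases and excluded the one absent from the list, the proof is complete. You do not need to show that in case~(2) the conic $\tilde\Delta$ is a double line, or that in case~(3) the common log canonical centre is a fibre: those refinements (while correct, and indeed the Remark following the proposition confirms the double-line behaviour in examples) go beyond the statement. The ``parallel bookkeeping'' you anticipate as the main obstacle simply is not part of the proof.
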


\begin{rem}
There are examples of del Pezzo foliations of rank $2$ on Grassmannians whose general log leaves
are $(\tilde F,\tilde \Delta)= (\p^2,2\ell)$ (see \cite[4.3]{fano_fols}).
\end{rem}

\bibliographystyle{amsalpha}
\bibliography{foliation}

\end{document}